\documentclass[12pt]{article}

\usepackage{latexsym,amssymb,amsmath}
\pagestyle{headings}

\newcommand{\z}{\mathbb Z}
\newcommand{\q}{\mathbb Q}
\newcommand{\n}{\mathbb N}
\newtheorem{lem}{Lemma}[section]

\newtheorem{co}[lem]{Corollary}
\newtheorem{thm}[lem]{Theorem}
\newtheorem{prop}[lem]{Proposition}

\newenvironment{proof}{\textbf{Proof.}}{\newline\hspace*{\fill}{$\Box$}\\}

\begin{document}
\title{Non proper HNN extensions and uniform uniform exponential
growth}
\author{J.\,O.\,Button\\
Selwyn College\\
University of Cambridge\\
Cambridge CB3 9DQ\\
U.K.\\
\texttt{jb128@dpmms.cam.ac.uk}}
\date{}
\maketitle
\begin{abstract}
If a finitely generated torsion free 
group $K$ has the property that all finitely
generated subgroups $S$ of $K$ are either small or have growth constant 
bounded uniformly away from 1 
then a non proper HNN extension $G=K\rtimes_\alpha\z$
of $K$ has the same property. Here small means cyclic or, if $\alpha$ has
no periodic conjugacy classes, free abelian of bounded rank.
\end{abstract}
\section{Introduction}

If $A$ is a finite generating set for the group $G$ then the growth
function $\gamma_A:\n\mapsto\n$ of $G$ with respect to $A$ is the
number of elements in $G$ with word length at most $n$ when written
as a product of the elements of $A$ and their inverses. The exponential
growth rate $\omega(G,A)$ of $G$ with respect to $A$ is defined to be
the limit as $n$ tends to infinity of $\gamma_A(n)^{1/n}$: this limit
always exists as $\gamma_A$ is a submultiplicative function. If 
$\omega(G,A)>1$ for some $A$ then this holds for all finite generating
sets and so we have the division of finitely generated groups into those
with exponential and non-exponential word growth.

However we also have the newer concept of uniform exponential growth.
Here we define the growth constant $\omega(G)$ to be the infimum over
all finite generating sets $A$ of $\omega(G,A)$ and say that $G$ has
uniform exponential growth if this infimum is strictly bigger than 1.
(However the supremum is always infinity for groups of exponential
growth: for instance \cite{wag} Proposition 12.10(b) shows that for any
$c>1$ there is a finite set $S\subseteq G$ where the group 
$\langle S\rangle$ has $\gamma_S(n)\geq c^n$ for arbitrarily large $n$.
But now we can add a finite generating set of $G$ to $S$ without
reducing the growth function.)  
The existence of a finitely generated
group having exponential but non uniform exponential growth was eventually
established in \cite{jsw}. Nevertheless there are many classes where any
group with exponential growth is also known to have uniform exponential 
growth: here we just mention word hyperbolic groups \cite{k} and indeed
groups which are hyperbolic relative to a collection of proper subgroups
\cite{xx}, groups
which are linear over a field of characteristic 0 \cite{emo}, elementary
amenable groups \cite{osea} and 1-relator groups \cite{grdlh}.

Very recently there has been interest in the concept of what is sometimes
called uniform uniform growth: we say that a class of groups has uniform
uniform growth if there is $k>1$ such that every
group in the class has growth constant at least $k$. In particular one
consequence of the work of Breuillard and Breuillard-Gelander culminating
in \cite{breu} Corollary 1.2 
is that if $d$ is an integer then the class of non virtually soluble
linear groups of dimension $d$ over any field has uniform
uniform growth. We also have \cite{fdc} which establishes uniform uniform
growth for the fundamental groups of closed 3-manifolds which have
exponential growth. Moreover whenever we have a group $G$ with uniform
exponential 
growth there is always an opportunity to ask about uniform uniform growth:
namely is there $k>1$ such that any finitely generated
subgroup of $G$ which is not ``small'' (for instance virtually cyclic,
virtually nilpotent, virtually soluble) has growth constant at least $k$?
Note that if $G$ fails this property when small means 
virtually soluble then
$G$ cannot be a linear group by the above. For examples which do possess
this property  we have in
\cite{man} the existence of $k_S>1$ such 
that any finitely generated subgroup of the mapping class group of a
compact orientable surface $S$ is virtually abelian or has growth
constant at least $k_S$.

Another example is that in the paper \cite{grdlh}
on 1-relator groups, we have uniform uniform growth over all 1-relator
groups with $k=2^{1/4}$ apart from
cyclic groups, $\z\times\z$ or the fundamental
group of the Klein bottle. This is established using the results of 
\cite{bdlh} which obtained uniform uniform growth with the same $k$ for
most amalgamated free products. Also this $k$ applies for HNN extensions $G$ 
except where both associated subgroups are equal to the base
$H$, in which case $G$ is a semidirect product of the form $H\rtimes\z$.

In this paper our aim is to give results on when such a group $G$ has uniform
growth. In particular we consider the case when the class of
finitely generated subgroups of $H$ which are not ``small'' has uniform
uniform growth and we give conditions that will imply $G$ has uniform
growth. Moreover the methods establish that the class of
finitely generated subgroups of the new group $G$ which are 
not ``small'' also has uniform uniform growth. Sometimes the definition
of being small in $G$ is slightly wider than being small in $H$ but we
investigate when we can take both to be the same, which allows us to apply
our results to iterated HNN extensions.

In particular we show in Section 2 that the lower bound $2^{1/4}$ for the
growth constant of 1-relator groups also holds for finitely presented
groups of deficiency 1 (with exactly the same exceptions) and finitely
generated groups possessing a homomorphism onto $\z$ with infinitely
generated kernel. We then consider in Section 3 groups $G$ of the form
$K\rtimes_\alpha\z$ where $K$ is finitely generated and torsion free. We
show that if the growth constants of the non cyclic finitely generated
subgroups of $K$ are uniformly bounded away from 1 then the same is true
for the finitely generated subgroups of $G$ which are not cyclic,
$\z\times\z$ or the Klein bottle group. This has applications to the growth
constant of non proper HNN extensions of word hyperbolic groups, including
free-by-$\z$ and surface-by-$\z$ groups, as well as extensions of
Baumslag-Solitar groups. In Section 5 we prove the same result with the
non cyclic finitely generated subgroups of $K$ replaced by those finitely
generated subgroups which are not isomorphic to a free abelian group of
bounded rank. However we need to impose the condition that the
automorphism $\alpha$ forming the semidirect product has no periodic
conjugacy classes. If this condition holds then the finitely generated
subgroups of $G$ without uniform exponential growth will be the same
as those of $K$, so the result can be iterated.

In the last section we show how to adapt this theorem to allow the Klein
bottle group as an exceptional subgroup, both of $K$ and $G$ so again we
can form iterated HNN extensions with uniform exponential growth. Here
we require that $K$ is locally indicable. This has applications to the
growth of non proper HNN extensions of 
finitely generated groups of cohomological dimension (at most) 2, because
the only virtually nilpotent groups in this class are ($\{e\},\z$),
$\z\times\z$ and the Klein bottle group. The methods of proof are for the
most part standard exponential growth and group theoretical arguments,
although the Alexander polynomial also plays a role.

\section{Groups of deficiency 1}

For background in uniform exponential growth, see \cite{dlh} Chapter VI
and related references. Two
easily proved but invaluable results for a finitely generated
group $G$ are:\\
(1) If there exists a surjective homomorphism from $G$ to $Q$ then
$\omega(G)\geq \omega(Q)$.\\
(2) (Shalen-Wagreich Lemma) If $H$ is an index $i$ subgroup of $G$
then $\omega(H)\leq \omega(G)^{2i-1}$.\\
However it is not true that if $H$ is a finitely generated subgroup of
$G$ then $\omega(G)\geq\omega(H)$. For although it is true that if 
the finite set $A$
generates $G$ and $S$ is a subset of $A$ then $\omega(G,A)\geq
\omega(\langle S\rangle,S)$, so a finitely generated group with a
finitely generated subgroup of exponential growth also has exponential
growth, the example \cite{jsw} due to J.\,S.\,Wilson of the group with
non-uniform exponential growth is shown to have exponential growth
because it contains a non-abelian free group.   

We also quote here a few standard facts about HNN extensions.
We can form an HNN extension $G$ with stable letter $t$, base $H$ 
and associated subgroups $A,B$ 
whenever $H$ is a group possessing subgroups $A$ and $B$ where there exists
an isomorphism $\theta$ from $A$ to $B$. Note that if $H=A=B$ then we
obtain a semidirect product $G=H\rtimes_\theta\z$. We call
this a non-proper HNN extension whereas a proper $HNN$ extension is where
at least one of $A$ or $B$ is not equal to $H$.
Other possibilities are if one
of $A$ or $B$ is equal to $H$ in which case 
we say $G$ is an ascending HNN extension
(and if exactly one is equal we call the HNN extension strictly ascending).
Any HNN extension gives rise to a homomorphism $\chi$ onto $\z$ (called
the associated homomorphism of the HNN extension), which is defined by
sending $t$ to 1 and $H$ to 0. It is clear that if $H=A=B$ then the kernel
of $\chi$ is equal to $H$ because in this case every element of $G$ can
be written in the form $ht^i$ for $h\in H$.

Conversely a homomorphism $\chi$ from a group $G$ onto $\z$ allows us to
express $G$ as an HNN extension. However this is ambiguous if further
restrictions are not imposed: for instance 
we always have $G=\mbox{ker}(\chi)\rtimes\z$. If we insist that the base
is finitely generated then a better picture
emerges: by \cite{bns} Section 4 if we
express $G$ as two HNN extensions of this more restricted form
and the corresponding associated homomorphisms
are the same then they most both be non-proper, strictly ascending or
non-ascending together. In fact we are in the first case if and only
ker$(\chi)$ is finitely generated whereupon it is equal to the base. 
In the second case there is a small amount
of ambiguity in choosing the base but a great deal for non-ascending HNN
extensions. As an example the Baumslag Solitar group
$BS(2,3)=\langle a,t|ta^2t^{-1}=a^3\rangle$ can be formed with base $\langle
a\rangle$ equal to $\z$, but we can also write it as
$\langle a,b,t|b^2=a^3,tat^{-1}=b\rangle$ which is an HNN extension with
base the fundamental group of the trefoil knot. 
  
A finitely presented group $G$ is said to have deficiency $d$ if there
exists a presentation for $G$ consisting of $n$ generators and $n-d$
relators. It is well known that $\omega(F_n)=2n-1$ for
$F_n$ the free group of rank $n$ and Gromov conjectured that a group $G$
with deficiency $d$ has $\omega(G)\geq 2d-1$. It is readily seen using (1)
and (2) that if
$d\geq 2$ then $G$ has uniform exponential growth by quoting a famous
result of Baumslag and Pride \cite{bp} that $G$ has a finite index
subgroup surjecting onto $F_2$ but we have no uniform control over this 
index. In \cite{wlgr} J.\,S.\,Wilson established Gromov's conjecture by
use of pro-$p$ presentations. If $d=1$ then we certainly have groups $G$
for which $\omega(G)=1$, for instance $\z$, $\z\times\z$ and the Klein
bottle group $\langle a,t|tat^{-1}=a^{-1}\rangle$. But in another paper
\cite{wldf} of the same author, it was shown that any soluble
group of deficiency 1 is isomorphic to a Baumslag-Solitar group  
$G_k=\langle t,a|tat^{-1}=a^k\rangle$ with $k\in\z$ (so the above 3
groups correspond to $k=0,1,-1$ respectively). However it is known that
$G_k$ has uniform exponential growth for all other values of $k$, so it
is worth asking if non-soluble groups of deficiency 1 have uniform growth
or even uniform uniform growth.

A special class of deficiency 1 groups are those with a 2-generator
1-relator presentation which were dealt with in \cite{grdlh} (following
partial results in \cite{csgr}) where it was established that such a group
$G$ has $\omega(G)\geq 2^{1/4}$ with the above three exceptions.
This used the following theorem by de la Harpe and Bucher
in \cite{bdlh}.
\begin{thm}
A finitely generated group $G$
which is an HNN extension with base $H$ and associated subgroups $A$ and
$B$ has $\omega(G)\geq 2^{1/4}$ provided that $[H:A]+[H:B]\geq 3$.
\end{thm}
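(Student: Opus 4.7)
The plan is to exhibit, for every finite generating set $S$ of $G$, two elements of $S$-word-length at most $4$ that freely generate a rank-$2$ free subsemigroup of $G$. Any such pair supplies $2^n$ distinct elements in the $S$-ball of radius $4n$, so $\omega(G,S)\geq 2^{1/4}$; taking the infimum over $S$ yields the bound on $\omega(G)$.

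The canonical candidates come from Britton's Lemma. In the typical case $[H:A]\geq 2$ and $[H:B]\geq 2$, pick $a\in H\setminus A$ and $b\in H\setminus B$; any non-empty product in $\{at,bt^{-1}\}$ is in Britton normal form, because a $t^{-1}ht$ pinch would require $h\in A$ and a $tht^{-1}$ pinch would require $h\in B$, both excluded by the choice of $a,b$. In the extreme case $[H:A]\geq 3$ with $B=H$ (and its mirror), picking $a_1,a_2$ in distinct non-trivial cosets of $A$ makes $\{a_1t,a_2t\}$ a free-subsemigroup pair, since every positive product contains no $t^{-1}$ and so admits no pinch. Thus in every regime of the hypothesis there are explicit candidates of the form $h_i t^{\varepsilon_i}$ in $G$.

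The remaining task is to realise such a pair as short $S$-words. Here I would use the Bass-Serre tree $T$ of the HNN decomposition, on which $G$ acts with every vertex of degree $[H:A]+[H:B]\geq 3$. The associated homomorphism $\chi:G\to\z$ is surjective, so some $s_0\in S$ has $\chi(s_0)\neq 0$ and acts hyperbolically on $T$; since $G$ has no global fixed vertex, either a second $s\in S$ is hyperbolic or a length-$2$ product of generators is. Exploiting the degree-$\geq 3$ branching, one can conjugate $s_0$ by a short $S$-word to obtain a second hyperbolic element, of $S$-length at most $4$, whose axis in $T$ meets that of $s_0$ in only a bounded segment; a tree ping-pong argument then promotes (suitable powers of) these two hyperbolic elements into free-subsemigroup generators of the desired form within the length-$4$ bound. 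The main obstacle is precisely this uniform length bookkeeping: the hypothesis $[H:A]+[H:B]\geq 3$ prevents $T$ from being a line (where all generators would translate coherently and defeat ping-pong), but refining the constant down to exactly $4$ requires a careful organisation of the tree geometry in terms of the given $S$, and this is the technical heart of the argument.
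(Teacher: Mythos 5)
The paper states this theorem as a quoted result of Bucher and de la Harpe \cite{bdlh} and gives no proof of its own, so there is no in-paper argument to compare against; I am assessing your proposal on its own terms. Your first two paragraphs correctly identify the Britton-normal-form mechanism that yields free subsemigroups of the form $\langle at, bt^{-1}\rangle$ or $\langle a_1 t, a_2 t\rangle$, and the strategic relevance of the Bass--Serre tree with vertex degrees $[H:A]+[H:B]\geq 3$. But the third paragraph is not a proof: it defers the uniform word-length bookkeeping, which is the entire content of the theorem, to a ``careful organisation of the tree geometry'' that you do not carry out.

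Two concrete gaps. First, you write that ping-pong ``promotes (suitable powers of) these two hyperbolic elements into free-subsemigroup generators.'' Taking powers multiplies the $S$-length, so if any power greater than $1$ is needed the constant $2^{1/4}$ is lost; the whole difficulty is to arrange that the original elements of $S$-length at most $4$, not powers of them, already generate a free subsemigroup, and nothing in the outline shows this. Second, producing a second hyperbolic element by conjugating $s_0$ by a short $S$-word does not guarantee an axis distinct from that of $s_0$: a generator could normalise the axis, and ruling this out (or handling the exceptional cases where all short products are elliptic or share an axis) is precisely where the hypothesis $[H:A]+[H:B]\geq 3$ must be applied in a quantitative, case-by-case way rather than cited as ``branching.'' Even once two hyperbolic elements with distinct axes are in hand, a free-subsemigroup ping-pong requires that the translation lengths dominate the overlap of the axes, and you give no control on either quantity in terms of $S$. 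Until those steps are supplied with the explicit length-$\leq 4$ control, what you have is a plausible plan, not a proof.
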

The inequality allows infinite index subgroups, and so
the condition in this Theorem is equivalent to saying that $G$ can be
expressed as a proper HNN extension. 
We can apply this in a similar way as for 1-relator group presentations.
\begin{thm}
If $G$ has a deficiency 1 presentation then $\omega(G)\geq 2^{1/4}$, with
the exception of $\z$, $\z\times\z$ and the Klein bottle group where
$\omega(G)=1$.
\end{thm}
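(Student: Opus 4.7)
The strategy is to turn the deficiency 1 hypothesis into a proper HNN splitting of $G$ and invoke Theorem 2.1. A deficiency 1 presentation $\langle x_1,\ldots,x_n\mid r_1,\ldots,r_{n-1}\rangle$ forces $\hz$ to have rank at least $1$, since its torsion-free part is the cokernel of the $(n-1)\times n$ matrix of exponent sums. Fix a primitive surjection $\chi\colon G\twoheadrightarrow\z$; by the Bieri--Neumann--Strebel discussion recalled just before Theorem 2.1, $G$ is then an HNN extension with finitely generated base $H$ and associated homomorphism $\chi$, and this extension is either proper or non-proper.

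In the proper case at least one of the associated subgroups is a strict subgroup of $H$, so $[H\!:\!A]+[H\!:\!B]\geq 3$ and Theorem 2.1 gives $\omega(G)\geq 2^{1/4}$ immediately. The substance of the argument is therefore the opposite case, where every $\chi$ has finitely generated kernel and $G=\ker(\chi)\rtimes_\alpha\z$ for each such $\chi$. Here I would exploit the deficiency 1 hypothesis: a rank count on the presentation 2-complex (whose Euler characteristic vanishes) together with a Bieri--Eckmann / Alexander polynomial argument of the sort alluded to in the introduction should force the kernel $K=\ker\chi$ to be a finitely generated free group $F_m$. Then $m=0$ gives $G=\z$, and $m=1$ gives $G=\z\times\z$ or the Klein bottle group according as $\alpha$ acts trivially or by inversion, matching the three listed exceptions.

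The main obstacle is the residual case $K\cong F_m$ with $m\geq 2$ and $b_1(G)=1$: then the single character $\chi$ has finitely generated kernel, every HNN structure of $G$ is non-proper, and Theorem 2.1 is not directly applicable to $G$ itself. My intended remedy is to pass to a finite cyclic cover $G_k=\chi^{-1}(k\z)$ with $k$ chosen via the characteristic polynomial of $\alpha_*$ acting on $F_m^{ab}$, so that $\alpha_*^k$ acquires a nontrivial fixed subspace and $b_1(G_k)\geq 2$. Any additional integer character of $G_k$ then restricts nontrivially to $F_m$, whose kernel in $F_m$ is necessarily infinitely generated, so $G_k$ itself admits a proper HNN splitting and Theorem 2.1 gives $\omega(G_k)\geq 2^{1/4}$. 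Transferring this back to $G$ via the Shalen--Wagreich lemma (2) yields only $\omega(G)\geq\omega(G_k)^{1/(2k-1)}$, so the delicate remaining step is to bound $k$ uniformly, or more plausibly to replace the cover step by a direct Alexander polynomial argument, in order to recover the constant $2^{1/4}$ itself.
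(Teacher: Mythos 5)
Your skeleton coincides with the paper's: abelianize to get $\chi\colon G\twoheadrightarrow\z$, apply Bieri--Strebel to obtain an HNN splitting with finitely generated base and associated subgroups, dispose of the proper case by Theorem 2.1, and in the non-proper case argue that the finitely generated kernel is free of rank $m$, with $m\leq 1$ giving the three exceptions. The first genuine gap is the freeness step. The claim that a finitely generated kernel of a character on a deficiency 1 group must be free is not something a ``rank count on the presentation 2-complex plus an Alexander polynomial argument'' will deliver: it is precisely Kochloukova's theorem \cite{koch} on knot-like groups, proved with Novikov ring techniques, and it settled a longstanding conjecture (of Rapaport). A deficiency 1 group need not have cohomological dimension 2 (it may even have torsion, e.g.\ $\langle a,b\mid b^2\rangle$), so Bieri's freeness criterion for normal subgroups, which the paper does use later in Proposition 2.3 under extra hypotheses, is not available here either. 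As written, this step of your argument is an unproved assertion, not a proof.

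The second gap is the residual case $K\cong F_m$ with $m\geq 2$. Your finite cyclic cover $G_k$ combined with the Shalen--Wagreich lemma yields only $\omega(G)\geq\omega(G_k)^{1/(2k-1)}$ with $k$ depending on the monodromy, so the constant $2^{1/4}$ is lost, as you concede; moreover the inference from ``$\ker(\psi|_{F_m})$ is infinitely generated'' to ``$G_k$ admits a proper HNN splitting'' needs an argument, since it is $\ker\psi$ itself (or the associated subgroups of a Bieri--Strebel splitting for $\psi$) that must be controlled. The paper closes this case by quoting the Ceccherini-Silberstein--Grigorchuk bound $\omega(F_m\rtimes\z)\geq 3^{1/6}>2^{1/4}$ for $m\geq 2$ (Lemma 2.3 of \cite{csgr}, reworked in Section 3 via the commutator trick: conjugates of commutators of the generators land in the free fibre and generate a non-abelian free subgroup, with word length bounded by an absolute constant). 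You need that uniform estimate, independent of the automorphism, in place of the cover argument.
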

\begin{proof}
By abelianising this presentation we see that $G$ must have a
homomorphism onto $\z$. A result \cite{bs} of Bieri and Strebel states that
any finitely presented group with a homomorphism $\chi$ 
onto $\z$ can be written
as an HNN extension with $\chi$ as the associated homomorphism
where the base $H$ and the associated subgroups 
$A,B$ are all finitely generated. If $A<H$ or $B<H$ then the above result
applies, but if $A=B=H$ then we know that $H$ is equal to ker$(\chi)$. 
Now a recent result \cite{koch} of Kochloukova is that
a finitely generated kernel of any homomorphism from a deficiency 1 group
onto $\z$ is free. If it is free of rank 0 or 1 then we obtain the three
exceptions but any free by cyclic group of the form $G=F_n\rtimes_\alpha\z$
where $n\geq 2$ has $\omega(G)\geq 3^{1/6}>2^{1/4}$. This result is
Lemma 2.3 in \cite{csgr} and we will give more details in the next section
where our aim is to generalise this proof for other groups.
\end{proof}
In \cite{grdlh} an immediate corollary of their result on 1-relator groups
is that if $G$ is finitely presented and has a finite generating set $S$
such that $\omega(G,S)<2^{1/4}$ then $G$ has deficiency at most 1.
Consequently by Theorem 2.2 we can strengthen this conclusion by saying that
$G$ has deficiency at most 0, or $G$ is one of our three exceptional
groups in which case $\omega(G,S)=1$.

It might be wondered if we can have uniform uniform exponential growth
not just for deficiency 1 groups but for any subgroup of a deficiency 1
group, or at least for any subgroup which is not virtually nilpotent.
This cannot hold because any group $G$ having a finite presentation
of deficiency at most zero can appear as a subgroup of deficiency 1:
merely add new generators to the presentation for $G$ to form the free
product $G*F_n$ until the new presentation has deficiency 1 and then $G$
will be a free factor. However the chief ingredients in the above proof
are that our group has a surjection to $\z$ and that if the kernel of this
surjection is finitely generated then this kernel belongs to a well
behaved class of groups. The first condition holds for all non-trivial
subgroups whenever $G$ is a
torsion free 1-relator group (the torsion free condition is equivalent
here to the relator not being a proper power in the free group providing
the generators for the 1-relator presentation for $G$) because it 
was proved by
Brodskii \cite{br} and Howie \cite{hw1r} that $G$ is locally indicable:
that is any (non-trivial) finitely generated subgroup of $G$ has a
homomorphism onto $\z$. As for the second condition, we will need to
invoke a longstanding conjecture on 1-relator groups (required here only
for the torsion free ones) which is that they are coherent, namely every
finitely generated subgroup is finitely presented.

\begin{prop} Suppose that $G$ is a torsion free 1-relator group which is
coherent. Then any finitely generated subgroup $H$ of $G$ has $\omega(H)
\geq 2^{1/4}$ unless $H$ is trivial, $\z$, $\z\times\z$ or the Klein bottle
group in which case $\omega(H)=1$.
\end{prop}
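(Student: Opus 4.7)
The plan is to mimic the argument used for Theorem~2.2, but applied to an arbitrary non-trivial finitely generated subgroup $H$ of $G$ rather than to $G$ itself. Because $G$ is a torsion-free 1-relator group it is locally indicable by the Brodskii--Howie theorem, so $H$ admits a surjection $\chi$ onto $\z$; and the coherence hypothesis guarantees that $H$ is finitely presented. I would then invoke the Bieri--Strebel result to write $H$ as an HNN extension whose base and associated subgroups are all finitely generated, having $\chi$ as its associated homomorphism.

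If this HNN extension is proper then Theorem~2.1 immediately gives $\omega(H)\geq 2^{1/4}$. The remaining case is non-proper: $H=K\rtimes_\alpha\z$ with $K=\ker(\chi)$ finitely generated. Granting that $K$ is free, the analysis concludes exactly as in the proof of Theorem~2.2: if $K$ is trivial then $H\cong\z$; if $K\cong\z$ then $H$ is one of the two torsion-free extensions of $\z$ by $\z$, namely $\z\times\z$ or the Klein bottle group; and if $K\cong F_n$ with $n\geq 2$ then Lemma~2.3 of \cite{csgr} yields $\omega(H)\geq 3^{1/6}>2^{1/4}$.

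The main obstacle is proving $K$ free. Kochloukova's theorem as quoted in the proof of Theorem~2.2 gives this when the ambient group has deficiency~1, whereas here $H$ is only known to be a finitely presented subgroup of a coherent torsion-free 1-relator group and need not itself have deficiency~1. The natural strategy is to exploit that $G$, hence $H$, hence $K$, has cohomological dimension at most~2 and that $K$ is finitely presented (by coherence applied to $K\leq G$), torsion-free and locally indicable. One would then try to use the extension $1\to K\to H\to\z\to 1$ to force $\mathrm{cd}(K)\leq 1$ and conclude via Stallings--Swan that $K$ is free. Failing a direct cohomological argument, a fallback would be to set up an induction on the first Betti number of $H$, so that the freeness of $K$ in the crucial subcase is supplied by the inductive hypothesis applied to $K$ as a finitely generated subgroup of $G$ of strictly smaller complexity.
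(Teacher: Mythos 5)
Your structure exactly mirrors the paper's: local indicability (Brodskii--Howie) gives the surjection $\chi:H\to\z$, coherence gives finite presentation of $H$, and Bieri--Strebel plus Theorem~2.1 disposes of the case where $\ker\chi$ is infinitely generated. You have also correctly isolated the single non-trivial remaining step, namely showing that a finitely generated kernel $K$ must be free, and you have assembled precisely the right ingredients for it (Lyndon's theorem that $\mathrm{cd}(G)=2$, coherence giving that $K$ is finitely presented, and the extension $1\to K\to H\to\z\to 1$).

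The ``direct cohomological argument'' you are hoping for is in fact a known theorem, and it is the one the paper invokes: Corollary~8.6 of Bieri's Queen Mary notes states that a normal \emph{finitely presented} subgroup of infinite index in a finitely presented group of cohomological dimension~2 is free. Applied to $K\lhd H$ (which has infinite index, since $H/K\cong\z$), this gives $K\cong F_n$, and the rest follows as you say. So your main route is correct and essentially complete once you cite Bieri; you do not need Stallings--Swan separately, since Bieri's statement already concludes freeness.

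Your fallback, induction on $\beta_1(H)$, would not work and should be discarded: there is no reason for $\beta_1(K)$ to be smaller than $\beta_1(H)$. For instance, a free-by-cyclic group $H=F_3\rtimes\z$ can have $\beta_1(H)=1$ while its kernel $K=F_3$ has $\beta_1(K)=3$, so the ``complexity'' does not decrease.
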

\begin{proof}
We know that there exists a homomorphism $\chi$ from $H$ onto $\z$ if $H$
is non-trivial. By the above $\omega(H)\geq 2^{1/4}$ if $K=\mbox{ker}(\chi)$
fails to be finitely generated. But otherwise $H=K\rtimes\z$. An old result
of Lyndon \cite{lyn} is that $G$ has cohomological dimension 2, so $H$ does
as well. The coherence assumption means that $H$ and $K$ are finitely
presented. We then apply \cite{bi} Corollary 8.6 which states that a
normal finitely presented subgroup of infinite index in a finitely
presented group of cohomological dimension 2 must be free. Thus $H$ is of
the form $F_n\rtimes\z$ and hence we obtain $\omega(H)\geq 3^{1/6}$ as before
unless $n=0$ or 1, in which case $H$ is again one of our three exceptional
groups.
\end{proof}

We have seen how the de la Harpe-Bucher result immediately gives us
uniform exponential growth for finitely presented groups having a
homomorphism to $\z$ where the kernel is infinitely generated. We finish
this section with a few words on how to extend this to finitely generated
groups with the same property. This follows as part of a wide ranging
result by Osin which is Proposition 3.2 in \cite{osea}, stating that a
finitely generated group $G$ having an infinitely generated normal subgroup
where the quotient is nilpotent of class (also called degree) $d$ has
$\omega(G)\geq 2^{1/\alpha}$, where $\alpha=3.4^{d+1}$. For our
situation where $d=1$, this gives us $\omega(G)\geq 2^{1/48}$. However we
can recover the previous $2^{1/4}$ bound.
\begin{prop}
If $G$ is a finitely generated group having a homomorphism $\chi$ onto $\z$
with infinitely generated kernel then $\omega(G)\geq 2^{1/4}$.
\end{prop}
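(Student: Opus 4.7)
The plan is to mimic the proof of Theorem 2.2 by exhibiting $G$ as a proper HNN extension and invoking Theorem 2.1. Since $G$ is only finitely generated we cannot use Bieri--Strebel to secure a finitely generated base, but Theorem 2.1 permits an arbitrary base, so I will build one as an increasing union of finitely generated subgroups using the generators of $G$.

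Fix $t\in G$ with $\chi(t)=1$ and, after replacing each generator $s$ in a finite generating set of $G$ by $st^{-\chi(s)}\in K=\ker\chi$, obtain a finite generating set of the form $\{t\}\cup Y$ with $Y\subset K$. Let $\alpha$ denote conjugation by $t$, an automorphism of $K$, and set
\[H^+=\langle \alpha^j(Y):j\geq 0\rangle,\qquad H^-=\langle \alpha^j(Y):j\leq 0\rangle.\]
Since $\alpha(H^+)\subseteq H^+$, the group $G$ is the ascending HNN extension of $H^+$ along $\alpha|_{H^+}$ with stable letter $t$, base $H^+$, and associated subgroups $H^+$ and $\alpha(H^+)$: the natural surjection from the abstract HNN extension onto $G$ is injective by a normal form argument on $\chi$-exponents. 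The symmetric statement holds for $H^-$ with stable letter $t^{-1}$.

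The main obstacle is to show that at least one of $H^+,H^-$ is a proper subgroup of $K$. Suppose both equal $K$. Since $\alpha^{\pm 1}(Y)$ is finite and lies in the ascending unions $\bigcup_N H^\pm_N$, where $H^\pm_N$ is generated by those $\alpha^j(Y)$ with $|j|\leq N$ on the relevant side, we can find a single $M$ with $\alpha^{-1}(Y)\subseteq H^+_M$ and $\alpha(Y)\subseteq H^-_M$. I claim $\alpha^j(Y)\subseteq H_M:=\langle \alpha^k(Y):|k|\leq M\rangle$ for every $j\in\z$: this is immediate for $|j|\leq M$, and once $\alpha^k(Y)\subseteq H_M$ for all $k\leq j$ the identity $\alpha^{j+1}(Y)=\alpha^j(\alpha(Y))\subseteq\alpha^j(H^-_M)\subseteq H_M$ drives the induction upwards, and symmetrically downwards. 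But $K=\langle \alpha^j(Y):j\in\z\rangle$, obtained by pushing all $t$'s past $Y$ in any word of zero $\chi$-exponent, so the claim forces $K=H_M$ to be finitely generated, contradicting the hypothesis.

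After relabelling if necessary we may assume $H^+\subsetneq K$. Then $\alpha(H^+)\subsetneq H^+$, since any $\alpha$-invariant subgroup of $K$ containing $Y$ equals $K$. Hence the HNN decomposition $G=H^+\ast_{\alpha|_{H^+}}$ has
\[[H^+:H^+]+[H^+:\alpha(H^+)]=1+[H^+:\alpha(H^+)]\geq 3,\]
and Theorem 2.1 gives $\omega(G)\geq 2^{1/4}$.
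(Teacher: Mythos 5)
Your proof is correct and follows essentially the same strategy as the paper: Proposition 2.4 is there deduced from Theorem 2.5, whose proof builds exactly your two one-sided ascending subgroups $H^{+}=\langle \alpha^{j}(Y):j\geq 0\rangle$ and $H^{-}=\langle \alpha^{j}(Y):j\leq 0\rangle$ (in the notation $x_i=t^ixt^{-i}$), exhibits $G$ as an ascending HNN extension over each, and shows that if both decompositions are non-proper then $K$ is finitely generated. Your write-up is a slightly cleaner packaging, identifying the subgroups intrinsically and invoking a normal-form argument where the paper verifies $\Gamma\cong G$ by explicit presentation manipulation.
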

This follows immediately from Theorem 2.1 (which does allow the base and
associated subgroups to be infinitely generated) and the following theorem,
which can be seen as a variation on the Bieri-Strebel
result, replacing the finitely presented hypothesis for $G$ with that of
being finitely generated. Note that the Bieri-Strebel result itself does
not hold for finitely generated groups as shown in \cite{bns} Section 7
by the example of $F/F''$ for $F$ a non-abelian free group.
\begin{thm}  
If $G$ is a finitely generated group with a
homomorphism onto $\z$ then either the kernel is finitely generated or
$G$ can be written as an HNN extension (with base and associated subgroups
possibly infinitely generated) where at least one of the
associated subgroups $A,B$ is not equal to the base $H$. 
\end{thm}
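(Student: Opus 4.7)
The plan is to extract from a good generating set for $G$ a subgroup of $K=\ker\chi$ that will serve as the base of a strictly ascending HNN decomposition. Pick $t\in G$ with $\chi(t)=1$ and a finite set $T\subseteq K$ such that $\{t\}\cup T$ generates $G$, and let $\alpha$ be the automorphism of $K$ given by conjugation by $t$. Define
\[H^+=\langle\alpha^i(T):i\geq 0\rangle,\quad H^-=\langle\alpha^i(T):i\leq 0\rangle;\]
these are subgroups of $K$ (possibly infinitely generated) satisfying $\alpha(H^+)\subseteq H^+$ and $\alpha^{-1}(H^-)\subseteq H^-$.

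If $\alpha(H^+)\subsetneq H^+$, I would claim that $G$ equals the strictly ascending HNN extension $\tilde G=\langle H^+,t\mid tht^{-1}=\alpha(h),\ h\in H^+\rangle$, whose base is $H^+$ and whose associated subgroups are $H^+$ and the proper subgroup $\alpha(H^+)$. The natural map $\tilde G\to G$ is surjective because $T\subseteq H^+$. For injectivity, note that $\ker(\chi_{\tilde G})=\bigcup_{n\geq 0}t^{-n}H^+ t^n$ is an ascending union in $\tilde G$ and maps to $\bigcup_{n\geq 0}\alpha^{-n}(H^+)=K$; the restriction to each $t^{-n}H^+t^n$ is the isomorphism $\alpha^{-n}$ on $H^+$ followed by inclusion into $K$, hence injective, so the map on kernels is an isomorphism and $\tilde G\to G$ is as well by the five lemma. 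The symmetric argument with $t^{-1}$ in place of $t$ handles the case $\alpha^{-1}(H^-)\subsetneq H^-$.

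The main obstacle is the remaining case, where $\alpha(H^+)=H^+$ and $\alpha^{-1}(H^-)=H^-$, which I propose to rule out using the hypothesis that $K$ is infinitely generated. Since $\alpha$ is an automorphism of $K$, the first equality makes $H^+$ invariant under every $\alpha^n$, forcing $H^+=\langle\alpha^n(T):n\in\z\rangle=K$; similarly $H^-=K$. Now because $K=H^+$, the finite set $\alpha^{-1}(T)$ lies in the finitely generated subgroup $H_N^+:=\langle\alpha^i(T):0\leq i\leq N\rangle$ for some $N\geq 0$. An induction on $j\geq 0$ then shows $\alpha^{-j}(T)\subseteq H_N^+$: applying $\alpha^{-j}$ to $\alpha^{-1}(T)\subseteq H_N^+$ expresses $\alpha^{-j-1}(T)$ using the elements $\alpha^{-j+i}(T)$ for $0\leq i\leq N$, each of which lies in $H_N^+$ either by the inductive hypothesis (when $-j+i<0$) or by the trivial inclusion (when $0\leq -j+i\leq N$). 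Combined with $H^-=K$, this gives $K\subseteq H_N^+$, contradicting that $K$ is infinitely generated while $H_N^+$ is finitely generated.
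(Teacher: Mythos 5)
Your proof is correct, and it is built on exactly the same ascending HNN decomposition as the paper's: the candidate base is $H^+=\langle\alpha^i(T):i\geq 0\rangle$ with associated subgroups $H^+$ and $\alpha(H^+)$ (in the paper's notation, $\langle x_i,y_i:i\geq 0\rangle$ and $\langle x_i,y_i:i\geq 1\rangle$), you try both directions, and if both ``stick'' you deduce that the kernel is finitely generated. Where you genuinely diverge is in how you establish that $G$ actually \emph{is} the ascending HNN extension $\tilde G$ with this base: the paper writes down an explicit (infinite) presentation of $K$ by Reidemeister--Schreier rewriting, forms $\Gamma$ as an abstract HNN extension of $H^+$, and then performs a somewhat delicate manipulation of presentations (adding negative-indexed generators and a carefully chosen cofinal family of relators, then arguing all omitted relators are consequences under $s$-conjugation) to identify $\Gamma$ with $G$. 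You instead observe directly that $\tilde G\to G$ restricts to a bijection between $\ker\chi_{\tilde G}=\bigcup_{n\geq 0}t^{-n}H^+t^n$ and $K=\bigcup_{n\geq 0}\alpha^{-n}(H^+)$ (injective on each term, nested images, so injective on the union) and then apply the five lemma. This is cleaner, avoids presentation bookkeeping, and isolates the only fact really being used, namely that $\alpha$ restricts to an injection of $H^+$ into itself. Your handling of the terminal case, deducing $K=H^+_N$ is finitely generated from $\alpha(H^+)=H^+$ and $\alpha^{-1}(H^-)=H^-$, is the same finitary observation the paper makes with its indices $m$ and $M$.
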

\begin{proof}
We use the fact that $G$ will have a presentation
(with finitely many generators but possibly infinitely many relators) where
one generator $t$ appears with zero exponent sum in each of the relators 
(and the homomorphism $\chi$ merely sends an element $g$ of $G$ to the 
exponent sum of $t$ in any word representing $g$). We will assume that there
are two other generators $x,y$ for $G$ in order to reduce numbers of 
subscripts but the idea behind the proof is exactly the same in the more
general case.

Given our presentation
\[G=\langle t,x,y| r_j(t,x,y): j\in\n\rangle\]
where $t$ has 0 exponent sum in all of the $r_j$, 
we can use Reidemeister-Schreier
rewriting to get a presentation for the kernel $K$ of $\chi$ as follows:
the generators are $x_i=t^ixt^{-i}$ and $y_i=t^iyt^{-i}$ for $i\in\z$ and
the relators $\overline{r}_{j,k}$ for $j\in\n$ and
$k\in\z$ are formed by taking the
relators $r_{j,k}=t^kr_jt^{-k}$ (which are words in $t,x,y$ with zero
exponent sum in $t$) and rewriting
them in terms of $x_i$ and $y_i$ where $i\in\z$.
We then have a (rather long) alternative presentation for $G$ of the form
\[\langle t,x_i,y_i:i\in\z|\overline r_{j,k}(x_i,y_i): j\in\n,k\in\z;
tx_it^{-1}=x_{i+1}, ty_it^{-1}=y_{i+1}:i\in\z\rangle\]

Consequently $G$ is a non-proper HNN extension with base $K$. However we 
attempt to express $G$ as a strictly ascending (and hence proper) 
extension by setting $H=A=\langle
x_i,y_i\rangle$ where now $i$ ranges over $0,1,2,\ldots$, not $\z$, and
$B=\langle x_i,y_i\rangle$ for $i=1,2,3,\ldots$. We can certainly form
the HNN extension $\Gamma=\langle H,s\rangle$ 
with base $H$, stable letter $s$ and associated subgroups $A,B$,
This is allowed as $A$ is isomorphic to $B$ inside $H$ because we 
have $t\in G$ with $tAt^{-1}=B$.
The generators for $\Gamma$ are then $s,x_i,y_i$ for
$i\geq 0$ and the relations for $\Gamma$ are the relations that hold in $H$ 
along with
\[sx_is^{-1}=x_{i+1}, sy_is^{-1}=y_{i+1}\mbox{ for }
i=0,1,2,\ldots\]  

We now alter the presentation for $\Gamma$ somewhat. We add generators
$x_i,y_i$ and relators $sx_is^{-1}=x_{i+1}$ and $sy_is^{-1}=y_{i+1}$
for negative values of $i$.
Also, rather than including all relations that hold in $H$, we take for
each original relator $r_j$ a high enough value $l(j)$ of $k$ such that the
resulting relator $\overline r_{j,l(j)}$ is written in terms of only those
$x_i,y_i$ where $i\geq 0$. Then any relation that holds amongst the
generators of $H$ also holds in $K$ and so is a consequence of the
$\overline r_{j,k}$ over all $j$ and $k$. But for each $j$ we included
$\overline r_{j,l(j)}$ in our presentation for 
$\Gamma$, and so as conjugating by
$s^{\pm 1}$ sends $\overline r_{j,k}$ to 
$\overline r_{j,k\pm 1}$, all $\overline r_{j,k}$ are
equal to the identity in $\Gamma$. Consequently in the presentation for
$\Gamma$ we can now throw in all remaining $\overline r_{j,k}$. Finally,
on renaming $s$ as $t$, we see that this presentation for $\Gamma$ is
identical to the long one for $G$.

Our hope is that $\Gamma$ is a proper HNN extension with, say, 
$x_0\notin H$. However it could be that $x_0$ and $y_0$ are both in $H$, so
can be expressed as words $v(x_i,y_i)$ and $w(x_i,y_i)$ where all $i$ 
appearing in both words are at least 1. As only finitely many $i$ can appear
in these two words, let $M$ be the maximum value occurring. Note that
$x_{-1},y_{-1}$ are in $H$ too as on conjugating by $t$
they can be expressed in terms of
$x_0,y_0,\ldots ,x_{M-1},y_{M-1}$ and then $x_0,y_0$ can be replaced.
Continuing this process, we see that $x_i,y_i\in H$ for all negative values
of $i$.

Now what we do is to run the whole proof over again but the other way,
meaning that our new $H$ and $A$ are now $\langle x_0,y_0,x_{-1},y_{-1},
\ldots\rangle$ and $B$ is set equal to $\langle x_{-1},y_{-1},x_{-2},y_{-2},
\ldots\rangle$. If we find that both $x_0$ and $y_0$ are words in $x_i,y_i$ 
for strictly negative $i$ then let $m$ be the lowest value of $i$
appearing. Just as before we see that $x_1,y_1,x_2,y_2,\ldots$ can be written
in terms of $x_m,y_m,\ldots ,x_0,y_0$. But now $K$ is finitely generated
by $x_m,y_m,\dots ,x_M,y_M$. 
\end{proof}

\section{Non-proper HNN extensions}

It was noted in \cite{butdef} that a free-by-cyclic group $G$ of the form
$F_n\rtimes\z$ for $F_n$ the free group of rank $n\geq 2$ has uniform
exponential growth by utilising the fact that the group is either large
or word hyperbolic. However a simpler and more direct argument in \cite{csgr}
Lemma 2.3 which we now examine tells us that $\omega(G)\geq 3^{1/6}$.
The previous Lemma in that paper notes that if a group $G$ is generated
by two finite sets $X$ and $Y$ then $\omega(G,X)\geq (\omega(G,Y))^{1/L}$
where $L$ is an upper bound for the length of each element of $Y$
when written as a word in $X^{\pm 1}$. 
Consequently on being given any finite set of
generators $A=\{a_1,\ldots ,a_l\}$ 
for $G=F_n\rtimes\z$, we consider the set $C$ of commutators of length 1
words, that is $C=\{[a_j^{\pm 1},a_k^{\pm 1}]:1\leq j,k\leq l\}$,
and further the set of conjugates by length 0 or 1 words of elements in $C$,
obtaining $B=\{a_i^{\pm 1}ca_i^{\mp 1}\}\cup C$ 
for $1\leq i \leq l$ and $c\in C$.
Then as we have that any element of $A\cup B$ has length at most 6 in the
elements of $A$, we obtain $\omega(G,A)^6\geq \omega(G,A\cup B)
\geq \omega(\langle B\rangle,B)$. Now $B$ has been chosen to lie in $F_n$.
Thus we are done if $\omega (\langle B\rangle)\geq 3$ 
which means we just have to show that
the free group $\langle B\rangle$ is non-abelian. 
This is straightforward but requires
using standard properties of free groups.

If we abstract this approach with the aim of applying it to other groups of
the form $G=K\rtimes\z$ where $K$ is finitely generated, we see that we
require of $K$ that there is a uniform lower bound $u>1$ for $\omega(H)$ over 
finitely generated subgroups $H$ of $K$ which are not ``small'', which here
means non-cyclic (in the above case $u=3$).
Then on being given any finite generating set $A$ for $G$ we can take words
which are of bounded length $b$ in the elements of $A$
(such as length 6 for free by
cyclic groups but this quantity has to be independent of $A$) and such that
these words lie in $K$. If these words happen to generate a subgroup of $K$
that is not ``small'' 
then we conclude as before that $\omega(G,A)\geq u^{1/b}$.

\begin{thm} Let $K$ (not equal to $\{e\}$ or $\z$)
be a finitely generated torsion-free group 
where there exists $u>1$ such that if $H$ is a finitely generated
subgroup of $K$ then either $\omega(H)\geq u$ or $H$ is cyclic. Then
for any non-proper HNN extension $G=K\rtimes\z$ we have that $G$ has
uniform exponential growth, and further $\omega(G)\geq\mbox{min }
(u^{1/6},2^{1/16})$.
\end{thm}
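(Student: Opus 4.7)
The plan is to adapt the argument from \cite{csgr} Lemma 2.3 (which gave $\omega(G) \geq 3^{1/6}$ for $G = F_n \rtimes \z$) to this more general setting, replacing the specific fact $\omega(F_n) \geq 3$ by the hypothesized uniform lower bound $u$. Given any finite generating set $A = \{a_1, \ldots, a_l\}$ for $G$, let $\chi \colon G \to \z$ be the HNN homomorphism with $\ker(\chi) = K$. I would form the commutator set and its length-one conjugation enlargement
\[
C = \{[a_j^{\pm 1}, a_k^{\pm 1}] : 1 \leq j, k \leq l\}, \qquad B = C \cup \{a_i^{\pm 1} c\, a_i^{\mp 1} : 1 \leq i \leq l,\ c \in C\}.
\]
Every element of $B$ has $\chi$-image zero, so $B \subseteq K$; elements of $C$ have length at most $4$ in $A$ and elements of $B$ at most $6$.

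The argument then splits on whether $\langle B \rangle \leq K$ is cyclic. In the good case $\langle B \rangle$ is non-cyclic, so the hypothesis on $K$ gives $\omega(\langle B \rangle) \geq u$. Combining this with the length-six bound yields
\[
\omega(G,A)^6 \geq \omega(G, A \cup B) \geq \omega(\langle B \rangle, B) \geq u,
\]
i.e.\ $\omega(G,A) \geq u^{1/6}$, which is the first half of the conclusion.

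The hard case, and the main obstacle, is when $\langle B \rangle$ is cyclic. Note first that $K$ itself cannot be abelian under our hypotheses: an abelian torsion-free group all of whose finitely generated subgroups are either cyclic or satisfy $\omega \geq u > 1$ must be cyclic, since abelian groups have polynomial growth, and $K = \z$ is excluded. So cyclicity of $\langle B \rangle$ is a severe constraint: all basic commutators of the $a_i$ together with their length-one conjugates sit inside a single $\langle z \rangle \leq K$, forcing $\langle z \rangle \lhd G$ and $G/\langle z \rangle$ abelian, so $G$ is metabelian with cyclic (possibly trivial) derived subgroup. From this I expect either to extract a second surjection $\chi' \colon G \to \z$ linearly independent from $\chi$ whose kernel is infinitely generated (triggering Theorem 2.4, and hence Theorem 2.1, for the bound $2^{1/4}$), or to exhibit directly a subgroup of $G$ generated by words of length at most $4$ in $A$ which is itself a proper HNN extension. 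Either route, combined with the length-$4$ factor coming from the commutators in $C$, would give $\omega(G,A) \geq (2^{1/4})^{1/4} = 2^{1/16}$. Combining the two cases then produces $\omega(G) \geq \min(u^{1/6}, 2^{1/16})$ uniformly in $A$, as required.
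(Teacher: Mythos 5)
Your ``good case'' is fine and matches the paper's strategy in spirit: commutators and their length-one conjugates land in $K$, have length at most $6$ in $A$, and a non-cyclic $\langle B\rangle$ gives $\omega(G,A)\geq u^{1/6}$. The proof fails in the hard case, and it fails at exactly the point where all the work in the paper is concentrated. The step ``all of $C$ and its conjugates sit inside a single $\langle z\rangle$, forcing $\langle z\rangle\lhd G$ and $G/\langle z\rangle$ abelian'' is unjustified: knowing that $c=z^{n_c}$ and $a_icA_i^{-1}\in\langle z\rangle$ only tells you that certain \emph{powers} of $a_iza_i^{-1}$ lie in $\langle z\rangle$, not that $a_i$ normalises $\langle z\rangle$. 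This is the Baumslag--Solitar phenomenon: one can have $x_0$ and $x_1=tx_0t^{-1}$ generating a cyclic group via a relation $x_0^\alpha x_1^\beta=e$ with $\beta\neq\pm 1$, and then $\langle x_0\rangle$ is not normalised by $t$. The paper's proof spends most of its length ruling this out, using that the kernel $L=H_{i,j,k}\cap K$ is finitely generated to force the Alexander polynomial of $\langle t,x\rangle\to\z$ to be monic (hence constant, contradicting $\beta_1(L)=1$), together with a separate centre argument when the chain $L_n$ eventually leaves the cyclic/abelian class. Nothing in your write-up addresses this, and the final paragraph is explicitly conjectural (``I expect either to \dots or to \dots'').

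Two further structural points. First, your hoped-for conclusion in the degenerate case is the wrong one: if the commutators really did generate a normal cyclic subgroup with abelian quotient, the paper shows $G$ is \emph{virtually nilpotent} (via Fitting's theorem and the $G^2$ argument), so there is no growth bound to extract there --- no second surjection to $\z$ with infinitely generated kernel need exist (e.g.\ $b_1(G)$ may be $1$). The resolution is a \emph{contradiction} with the hypothesis $K\neq\{e\},\z$, since $K$ virtually nilpotent forces $\omega(K)=1$. Second, the constant $2^{1/16}$ in the statement actually comes from a case your approach cannot see: the paper works with the two-generator subgroups $H_{i,j,k}=\langle a_i,[a_j,a_k]\rangle$ with $a_i\notin K$, and when $H_{i,j,k}\cap K$ is infinitely generated it applies Proposition 2.4 to get $\omega(H_{i,j,k})\geq 2^{1/4}$, hence $\omega(G,A)\geq 2^{1/16}$. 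Your set $B$ lies entirely inside $K$, so you never form a subgroup surjecting onto $\z$ and never trigger that proposition. You would need to restructure the argument around the subgroups $\langle a_i,[a_j,a_k]\rangle$ and supply the Alexander-polynomial (or equivalent) argument to close the gap.
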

\begin{proof}
Given any finite generating set $A=\{a_1,\ldots ,a_l\}$ for $G$, we consider
for each $1\leq i,j,k\leq l$ the 2-generator
subgroup $H_{i,j,k}=\langle a_i,[a_j,a_k]\rangle$. Note that
if $\omega(H_{i,j,k})=c$ then $\omega(G,A)\geq c^{1/4}$. If
$a_i\in K$ then $H_{i,j,k}\leq K$ so the inequality
would be true with $c=u$ unless
$H_{i,j,k}$ is cyclic in which case $a_i$ commutes with $[a_j,a_k]$.
But if $a_i\notin K$ then the associated homomorphism from $G$ to $\z$ with
kernel $K$ restricts to a non-trivial (so without loss of generality
surjective) homomorphism $\chi:H_{i,j,k}\rightarrow \z$. If the kernel
$L=H_{i,j,k}\cap K$ 
of this restriction is infinitely generated then we immediately get
by Proposition 2.4 that $\omega(H_{i,j,k})\geq 2^{1/4}$ and so $\omega(G)\geq
2^{1/16}$. But on setting $t=a_i$ and $x=[a_j,a_k]$ 
we have that $L$ is the
normal closure of $x$ in $H_{i,j,k}$ and so is generated by $x_n=t^nxt^{-n}$
for $n\in\z$. 

However we can now assume that $L$ is finitely generated. Let us define
$L_0$ to be the cyclic group $\langle x_0\rangle$ 
and $L_{\pm 1}$ to be the 2-generator group
$\langle x_0,x_{\pm 1}\rangle$ for each choice of sign.
If one of $L_{\pm 1}$
is not cyclic then we have $\omega(L_{\pm 1})\geq u$ and so as the length of
$x_0$ and $x_{\pm 1}$ is at most 6 in $A$, we obtain $\omega(G,A)\geq u^{1/6}$.

Thus we are done unless we find for all $i,j,k$
that $H_{i,j,k}$ is cyclic whenever $a_i$ is in $K$ and 
both of $L_{\pm 1}$ are cyclic whenever $a_i$ is not in $K$. Let
us assume for now that this always implies that $L_{\pm 1}=L_0$ and explain
how we finish. Having $L_1=L_0$ means that $t$ conjugates $x$ into a power
of itself, and $L_{-1}=L_0$ gives us the same for $t^{-1}$. This means that
$t\langle x\rangle t^{-1}=\langle x\rangle$ and furthermore that
$txt^{-1}=x^{\pm 1}$. If we now fix $j$ and $k$ but let $i$ vary, we conclude
that $\langle [a_j,a_k]\rangle$ is conjugated to itself by every element in
the generating set $A$ and so $\langle [a_j,a_k]\rangle$ is a normal cyclic
subgroup of $G$. Let $P$ be the product over $j$ and $k$
of these subgroups. 
By Fitting's Theorem, a (finite) product of normal nilpotent 
subgroups of $G$ is also nilpotent, and as $P$ lies in $K$ we have that
$P$ is cyclic. But $G/P$ is abelian as now all generators commute.
Moreover if $p$ is a generator for $P$ then we must have
$gpg^{-1}=p^{\pm 1}$ for all $g\in G$ because $P$ is normal in $G$. This
means that the centraliser $C(p)$ of $p$ in $G$
contains all squares, so contains the subgroup $G^2$ generated by all squares.
Now $G/G^2$ is a group of exponent 2, hence abelian, and is also finitely
generated so must be finite. But $P$ is contained in $G^2$ and hence is in the
centre of $G^2$. Quotienting $G^2$ by $P$, we are left with the abelian
group $G^2/P$, telling us that $G^2$ is nilpotent and $G$ is virtually 
nilpotent. This means that $K$ is too, so $\omega(K)=1$ which leaves only
$K=\{e\}$ or $\z$.

However we must now address the fact that $L_1$ being cyclic does not
mean that $L_0=L_1$, only that $L_0$ is a cyclic subgroup of $L_1$ and so
is of finite index. Consequently there is a power of $x_1$ which lies in $L_0$
and so we must have a relation holding in $L$ of the form $x_0^\alpha 
x_1^\beta$. We can assume that $\alpha$ and $\beta$ are coprime because
$x_0$ and $x_1$ commute and $L$ is torsion free. Also if $\beta=\pm 1$ then 
we have $L_1=L_0$ anyway.

We now set $L_2=
\langle x_0,x_1,x_2\rangle$ and so on. We have that either $L_2$ is non-cyclic,
so has uniformly exponential growth, or it remains cyclic
and contains $L_1$ with
finite index. But as $L$ is finitely generated, this sequence of subgroups
must eventually stabilise when $L_n=L_{n+1}=L$. We need to deal with two
possibilities. The first is that $L_n$ is still cyclic. The other is that
at some point $m$ (possibly for a large $m$ but which could be a lot 
smaller than $n$) 
we have that $L_m$ stops being cyclic and so has uniformly exponential
growth. We will show that in fact neither of these cases can occur. The
first uses the Alexander polynomial of $H_{i,j,k}$ and is based on the 
fact that the degree of the polynomial is $\beta_1(L;\q)$ but this
contradicts $L$ being finitely generated. The second case is again based
on $L$ being finitely generated to get that $L_n$ is still abelian.

We briefly review the facts we will need about the Alexander polynomial.
The Alexander polynomial $\Delta(t)$ is defined for a homomorphism
$\chi$ from a finitely generated group $G$ onto $\z$. Usually it is
supposed that $G$ is finitely presented but we will only ever need it
for 2-generator groups $G=\langle t,x\rangle$ where we can suppose that
$\chi(t)=1$ and $\chi(x)=0$. In this restricted setting, the results we
require will continue to apply even if $G$ is not finitely presented.
Let us set $K=\mbox{ker}(\chi)$ and note that $K$ is generated by
$x_i=t^ixt^{-i}$ for $i\in\z$. On abelianising we have that $K/K'$ is
a finitely generated module $M$ over the unique factorisation domain
$R=\z[t^{\pm 1}]$ where the polynomial variable $t$ acts on elements of
$M$ via conjugation by the group element $t$. In our 2-generator case 
we have that $M$ is a cyclic module $R/I$ generated by $x_0$
and here we define the 
Alexander polynomial $\Delta(t)$ to be the highest common factor of the
elements in $I$, or equivalently the generator of the smallest principal
ideal containing $I$. It is defined up to multiplication by units which
are $\pm t^k$ for $k\in\z$. 

The three facts that we require here are:\\
(1) The degree of $\Delta(t)$ (meaning the degree of the 
highest power of $t$ minus the lowest) is equal to the first Betti number 
$\beta_1(K;\q)$ which is defined to be the dimension of the vector space
$H_1(K;\q)=H_1(K;\z)\otimes_\z\q$ over $\q$. This is clear here because
$H_1(K;\z)=K/K'$ so we just work over the ring $\q[t^{\pm 1}]$ instead.\\
(2) If $K$ is finitely generated as a group then $\Delta(t)$ is monic at
both ends: that is the highest and lowest coefficients must both be $\pm 1$.
This follows because the sequence of subgroups $S_0=\langle x_0\rangle$,
$S_1=\langle x_0,x_1\rangle$, $S_2=\langle x_0,x_1,x_2\rangle,\ldots $ must
stabilise at $n$ say, so that $x_n$ is a word in $x_0,\ldots ,x_{n-1}$.
On abelianising, this gives us that $t^n+a_{n-1}t^{n-1}+\ldots +a_0$ must
hold in $M$, thus $\Delta(t)$ divides a monic polynomial so is itself
monic. As for the lowest coefficient, we argue the other way with
$x_0,x_{-1},x_{-2},\ldots$.\\
(3) If we consider the sequence of subgroups $S_i$ above then we must
clearly have $\beta_1(S_i)\leq i+1$ as this is bounded by the number of
generators of a group. But on taking the first $i$ where 
$\beta_1(S_i)\leq i$ (if one exists) we have $\beta_1(S_j)$ is non
increasing for
$j\geq i$. This is because once we have a relationship of the form
$\alpha_0 x_0+\ldots +\alpha_i x_i=0$ with $\alpha_i\neq 0$ in the
abelianisation of $S_i$, this relation will also hold in the abelianisation
of $S_{i+1}$ along with the same relation with the subscripts shifted up
by 1. Thus in $S_{i+1}/S_{i+1}'$ we see that $x_{i+1}$ will lie in the span
of $x_0,\ldots ,x_i$ which means we cannot increase $\beta_1$. We can now
repeat this argument. 

We apply the above to the homomorphism $\chi:H_{i,j,k}\rightarrow\z$ 
with kernel $L$. 
As the relation $x_0^\alpha x_1^\beta$ holds in $L$, we must have that
the Alexander polynomial $\Delta(t)$ of $\chi$ divides $\beta t+\alpha$. 
However $L$ being finitely generated means that $\Delta(t)$ must
be monic, so equal to
$\pm 1$ because $\beta\neq\pm 1$ and $\alpha$ and $\beta$
are coprime. But this implies that $\beta_1(L;\q)=0$ which contradicts
the fact that $L\cong\z$.

As for when $L$ is non-cyclic, we have
$L=\langle x_0,x_1,\ldots ,x_{n-1},x_n\rangle$. By using the conjugation
action of $t$ on $L$, we have that $x_i^\alpha x_{i+1}^\beta=e$ for all
$i\in\z$ and that $x_i$ commutes with $x_{i\pm 1}$.
We will show that a very high power of $x_0$ is in the centre of $L$.
Note that $x_0^\alpha$ is a power of $x_1$ and so commutes with
$x_2$. Moreover we have $x_0^{\alpha^2}=(x_1^\alpha)^{-\beta}$ which
commutes with $x_3$ because $x_1^\alpha$ does. Continuing in this
way, we see that $x_0^{\alpha^{n-1}}$ commutes with $x_n$ and all $x_i$
from $0$ to $n$, thus is in the centre $Z(L)$ of $L$. But $Z(L)$ is
characteristic in $L$, hence invariant under conjugation by $t$ so
$x_i^{\alpha^{n-1}}$ is in $Z(L)$ for all $i$.

To complete this argument we now start with $x_n$ and work backwards,
giving us that $x_i^{\beta^{n-1}}$ is in $Z(L)$ too. But $\alpha^{n-1}$
and $\beta^{n-1}$ are coprime, so $x_i\in Z(L)$ for all $i$ thus $L$ is
abelian. 

Thus in both cases we have shown that $L_1$ being cyclic implies that
$L_0=L_1$. We now repeat the argument with $L_{-1}$ in place of $L_1$.
\end{proof}

Our hypothesis required that every finitely generated subgroup of $K$ was
either ``small'' or had uniform exponential growth bounded below away from
1. However a straightforward adaptation of our argument obtains the same
conclusion for $G$, although with a slight widening of the concept of
``small''.

\begin{co} Let $K$ be a finitely generated torsion-free group 
where there exists $u>1$ such that if $H$ is a finitely generated
subgroup of $K$ then either $\omega(H)\geq u$ or $H$ is cyclic. 
Let $G=K\rtimes\z$ be any non-proper HNN extension of $K$. Then for
any finitely generated subgroup $S$ of $G$ we have that $S$ has
uniform exponential growth with $\omega(S)\geq\mbox{min }
(u^{1/6},2^{1/16})$ or
$S$ is $\{e\}$,$\z$, $\z\times\z$ or the Klein bottle group. 
\end{co}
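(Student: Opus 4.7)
The plan is to run the argument of Theorem~3.1 with the finitely generated subgroup $S$ playing the role of $G$. First I would dispose of the easy case: if $S$ is contained in $K$ then the hypothesis on $K$ applied directly to $S$ gives the conclusion at once, since either $S$ is cyclic (so $S=\{e\}$ or $\z$) or $\omega(S)\geq u\geq\min(u^{1/6},2^{1/16})$. So we may assume the associated homomorphism $\chi:G\to\z$ restricts non-trivially to $S$.

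Given any finite generating set $A=\{a_1,\ldots,a_l\}$ for $S$, I would form the two-generator subgroups $H_{i,j,k}=\langle a_i,[a_j,a_k]\rangle\leq S$ exactly as in Theorem~3.1, and rerun the case analysis verbatim. When $a_i\in K$ we have $H_{i,j,k}\leq K$, so either $H_{i,j,k}$ is non-cyclic and the hypothesis on $K$ gives $\omega(S,A)\geq u^{1/4}$, or $a_i$ commutes with $[a_j,a_k]$. When $a_i\notin K$ the map $\chi$ restricts non-trivially to $H_{i,j,k}$, and either the kernel $L=H_{i,j,k}\cap K$ is infinitely generated, in which case Proposition~2.4 yields $\omega(H_{i,j,k})\geq 2^{1/4}$ and hence $\omega(S,A)\geq 2^{1/16}$, or the analysis of $L_0$ and $L_{\pm 1}$ via the Alexander polynomial and Fitting's theorem forces $L_0=L_{\pm 1}$, unless along the way we pick up $\omega(S,A)\geq u^{1/6}$ from a non-cyclic $L_{\pm 1}$. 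These steps only use properties of $H_{i,j,k}$ itself and of its kernel inside $K$, so they transfer unchanged from the ambient $G$ to $S$.

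If every generating set evades the explicit lower bounds, I would conclude as in Theorem~3.1 that every $\langle[a_j,a_k]\rangle$ is normalized by every $a_i$, and hence is normal in $S$. Let $P$ be the product over all $j,k$ of these normal cyclic subgroups. Fitting's theorem shows $P$ is nilpotent, and since $P\leq S\cap K\leq K$ the hypothesis on $K$ (combined with $\omega(P)=1<u$) forces $P$ to be cyclic. The rest of the Theorem~3.1 argument transfers word-for-word to $S$: $S/P$ is abelian, the action of $S$ on the cyclic $P$ factors through $\z/2$ so the subgroup $S^2$ generated by squares centralizes $P$, and because $S/S^2$ is finite we deduce that $S^2$ is nilpotent of class at most $2$, making $S$ virtually nilpotent.

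Finally I would observe that a virtually nilpotent $S$ has every subgroup polycyclic, so $S\cap K$ is finitely generated, and the hypothesis on $K$ then makes $S\cap K$ cyclic. Since $\chi|_S$ is non-trivial, the extension $S=(S\cap K)\rtimes\z$ splits, leaving only $S=\z$, $S=\z\times\z$ or $S$ the Klein bottle group (the two automorphisms of $\z$). The main point needing verification is that each ingredient of the Theorem~3.1 proof—the Alexander polynomial step, Fitting's theorem, and the centralizer computation—really depends only on the two-generator subgroups $H_{i,j,k}\leq S$ and on $P\leq K$, and not on ambient features of $G$; once that is confirmed, no additional work is required.
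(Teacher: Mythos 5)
Your proposal is correct and follows essentially the same route as the paper, which simply says ``follow the proof of Theorem 3.1 with $G$ replaced by $S$'': after the easy case $S\leq K$, the paper notes that either $R=S\cap K$ is infinitely generated (Proposition 2.4 gives $2^{1/4}$) or $S=R\rtimes\z$ with $R$ a finitely generated subgroup of $K$ inheriting the hypothesis, so Theorem 3.1 applies verbatim with exceptional cases $R=\{e\}$ or $\z$. Your version re-verifies the internal steps instead of invoking the theorem as a black box, and your endgame (virtually nilpotent $S$ forces $S\cap K$ polycyclic, hence cyclic) is a sound equivalent of the paper's conclusion; the only blemish is a harmless misattribution of Fitting's theorem to the $L_0=L_{\pm1}$ step rather than to the subgroup $P$.
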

\begin{proof}
We follow the proof of Theorem 3.1 with $G$ replaced by $S$. If $S$ lies
in $K$ anyway then we have our bound for $\omega(S)$ unless $S=\{e\}$ or
$\z$. If $S$ has elements outside $K$ then the natural homomorphism
from $G$ to $\z$ with kernel $K$
restricts to a non-trivial homomorphism from $S$. If
the kernel $R=S\cap K$ of the restriction is infinitely
generated then we have $\omega(S)\geq 2^{1/4}$ by Proposition 2.4. Otherwise
we can regard $S$ as a semidirect product $R\rtimes\z$ and so
can replace $G$ with $S$ and $K$ with $R$ in Theorem 3.1. The hypotheses
are satisfied unless $R=\{e\}$ or $\z$, in which case $S$ can only be
$\z$, $\z\times\z$ or the Klein bottle group. 
\end{proof}

\section{Applications and examples}

The first examples of groups we can think of where every subgroup is
either cyclic or ``big'' are of course free groups. This is not
surprising in light of the fact that Theorem 3.1 takes as its starting
point the Ceccherini-Silberstein and Grigorchuk result that 
$G=F_n\rtimes\z$ has $\omega(G)\geq 3^{1/6}$ for $F_n$ the finitely
generated free group of rank $n\geq 2$. Here we can obtain a slight
generalisation.
\begin{prop}
If $G=F\rtimes\z$ where $F$ is a free group of any cardinality then
all finitely generated subgroups $H$ of $G$ have $\omega(H)\geq 3^{1/6}$
unless $H$ is $\{e\}$, $\z$, $\z\times\z$ or the Klein bottle group.
\end{prop}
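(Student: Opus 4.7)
The approach is to run the argument of Theorem 3.1 with $H$ in place of $G$ and $F$ in place of $K$, using a rigidity property of free groups to avoid the $2^{1/16}$ bound that would otherwise appear when the kernel of $\chi$ restricted to a $2$-generator subgroup is infinitely generated.

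If $H\leq F$ then $H$ is a finitely generated subgroup of a free group, so free of finite rank, and either lies in the exceptional list ($\{e\}$ or $\z$) or has $\omega(H)\geq 3\geq 3^{1/6}$. Assume therefore that the restriction $\chi|_H$ is non-trivial, fix a finite generating set $A=\{a_1,\ldots,a_l\}$ of $H$, and follow the proof of Theorem 3.1 by examining $H_{i,j,k}=\langle a_i,[a_j,a_k]\rangle$. When $a_i\in F$ the subgroup $H_{i,j,k}$ is free; it is either non-cyclic (giving $\omega(H,A)\geq 3^{1/4}$) or cyclic, in which case $a_i$ commutes with $[a_j,a_k]$. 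When $a_i\notin F$, set $t=a_i$, $x=[a_j,a_k]$, $x_n=t^nxt^{-n}\in F$, and $L_m=\langle x_0,\ldots,x_m\rangle$.

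The key new ingredient is a rigidity lemma: if $\beta$ is an automorphism of a free group $F$ and a non-trivial $x\in F$ commutes with $\beta(x)$, then $\beta(x)=x^{\pm 1}$. This holds because commuting non-trivial elements of $F$ lie in a common maximal cyclic subgroup $\langle r\rangle$ with $r$ not a proper power, so $x=r^a$ and $\beta(x)=r^b$; the equation $\beta(r)^a=r^b$, together with the fact that $\beta(r)$ is also not a proper power (automorphisms preserve this), forces $\beta(r)=r^{\pm 1}$, whence $\beta(x)=x^{\pm 1}$. Applying this with $\beta$ induced by conjugation by $t$, the hypothesis that $L_1=\langle x_0,x_1\rangle$ is cyclic yields $x_1=x_0^{\pm 1}$, so $\beta$ preserves $\langle x_0\rangle$ and hence every $x_n\in\langle x_0\rangle$, making $L=H_{i,j,k}\cap F=\langle x_0\rangle$ itself cyclic; similarly for $L_{-1}$. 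Consequently, if either $L_1$ or $L_{-1}$ is non-cyclic, it is a rank-$2$ free subgroup of $F$ with $\omega\geq 3$, and its generators having length at most $6$ in $A$ gives $\omega(H,A)\geq 3^{1/6}$.

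If instead, for every triple, both $L_1$ and $L_{-1}$ are cyclic, then the final structural argument of Theorem 3.1 goes through unchanged: the rigidity lemma shows that each $a_i\notin F$ normalises $\langle[a_j,a_k]\rangle$, while the earlier case covers $a_i\in F$, so each $\langle[a_j,a_k]\rangle$ is normal cyclic in $H$; their product $P$ is normal nilpotent by Fitting's theorem and, lying inside $F$, is cyclic; the squares subgroup $H^2$ centralises $P$ with abelian quotient, so $H^2$ is nilpotent and $H$ is virtually nilpotent. A virtually nilpotent subgroup of the free group $F$ is cyclic, so $R=H\cap F\in\{\{e\},\z\}$, and with $H/R\leq\z$ we conclude that $H$ is one of $\{e\}$, $\z$, $\z\times\z$, or the Klein bottle group. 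The main obstacle is the rigidity lemma itself, which substitutes for the Alexander-polynomial analysis of Theorem 3.1 and removes the branch that would otherwise give only the weaker bound $2^{1/16}$.
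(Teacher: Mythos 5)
Your proof is correct, but it takes a genuinely different route from the paper's. The paper splits on whether $R=H\cap F$ is finitely generated: if it is, then $H$ is a finite-rank free-by-cyclic group and the Ceccherini-Silberstein--Grigorchuk bound $\omega(F_n\rtimes\z)\geq 3^{1/6}$ applies directly; if it is not, the paper invokes the Feighn--Handel coherence theorem to conclude that $H$ is finitely presented of deficiency at least $2$, whence $\omega(H)\geq 3$ by Wilson's theorem. You instead rerun the whole of Theorem 3.1 inside $H$, replacing both the Alexander polynomial analysis and the infinitely-generated-kernel branch by your free-group rigidity lemma: if $x\neq e$ commutes with $\beta(x)$ for $\beta\in\mathrm{Aut}(F)$ then $\beta(x)=x^{\pm 1}$. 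Your proof of that lemma is sound (note that it genuinely needs $\beta$ to be an automorphism, not merely injective, to see that $\beta(r)$ is not a proper power). This lemma is precisely the ``exit strategy'' that Theorem 3.1 lacks in general: in a free group, $L_1$ cyclic forces $L_0=L_1$ on the nose, so $L=\langle x_0\rangle$ is automatically finitely generated and the $2^{1/16}$ branch never arises. What your route buys is a self-contained, elementary argument using only standard free-group facts and yielding the uniform $3^{1/6}$; what the paper's route buys is the stronger bound $\omega(H)\geq 3$ when $H\cap F$ is infinitely generated, at the cost of quoting two substantial external theorems. Both arguments arrive at the same exceptional list, yours via the virtually-nilpotent endgame together with the observation that a virtually nilpotent subgroup of $F$ is cyclic.
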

\begin{proof}
Either $H$ is contained in $F$, in which case $H$ is free and we have
$\omega(H)\geq 3$ or $H=\{e\}$ or $\z$, or $H$ is a semidirect product
$(H\cap F)\rtimes\z$. If $H\cap F$ is finitely generated then we have
$\omega(H)\geq 3^{1/6}$ or $H=\z,\z\times\z$ or the Klein bottle group.
If $H\cap F$ is infinitely generated then, although we know that 
$\omega(H)\geq 2^{1/4}$, we can do better by using an observation of
\cite{butdef} which is based on the results of \cite{fh}. The
latter proves that $H$ is finitely presented and it is pointed out in
Section 5 of the former that $H$ must have a presentation with deficiency
at least 2, so we have $\omega(H)\geq 3$ by J.\,S.\,Wilson's result.
\end{proof}

Our next examples are the surface groups $\Gamma_g$; these are the
fundamental groups of the closed orientable surface of genus $g\geq 2$.
It is known that $\omega(\Gamma_g)\geq 4g-3$ (see \cite{dlh} VII.B.15
although it is not known if this is best possible). Using Theorem 3.1
means we can treat HNN extensions of free groups and surface groups in
a unified fashion.

\begin{co} If $G=\Gamma_g\rtimes\z$ then we have $\omega(S)\geq 3^{1/6}$
for any finitely generated subgroup $S$ of $G$, apart from
$S$ equal to $\{e\}$, $\z$, $\z\times\z$ or the Klein bottle group.
\end{co}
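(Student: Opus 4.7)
The plan is to apply Corollary 3.2 to $K=\Gamma_g$ with $u=3$ and then upgrade the subcases where the general argument yields only the weaker constant $2^{1/16}$, using Proposition 4.1. The upgrade is available here because every infinite-rank subgroup of a closed surface group is free.

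First I would verify the hypothesis of Corollary 3.2 with $u=3$. Any nontrivial finitely generated subgroup $H$ of $\Gamma_g$ is either of finite index, in which case $H\cong\Gamma_{g'}$ for some $g'\geq g\geq 2$ and $\omega(H)\geq 4g'-3\geq 5$, or of infinite index, in which case $H$ is free of finite rank and $\omega(H)\geq 3$ provided $H$ is non-cyclic. So the hypothesis holds with $u=3$, and a direct application of Corollary 3.2 already gives $\omega(S)\geq\min(3^{1/6},2^{1/16})=2^{1/16}$, or $S$ is one of the listed exceptional groups.

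To strengthen the lower bound to $3^{1/6}$ I would split into three cases. If $S\subseteq\Gamma_g$ then $\omega(S)\geq 3$ unless $S=\{e\}$ or $\z$. If $S\cap\Gamma_g$ is infinitely generated then, being an infinite-rank subgroup of $\Gamma_g$, it is free of infinite rank; since $S/(S\cap\Gamma_g)\cong\z$ is free the extension splits and $S\cong(S\cap\Gamma_g)\rtimes\z$ is a finitely generated subgroup of a free-by-cyclic group, so Proposition 4.1 gives $\omega(S)\geq 3^{1/6}$ (in fact $\geq 3$, since the infinitely-generated subcase of Proposition 4.1 yields this stronger bound). Otherwise $R:=S\cap\Gamma_g$ is finitely generated and $S=R\rtimes\z$, and I would rerun the proof of Theorem 3.1 with $K$ replaced by $R$ and $u=3$.

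The main obstacle is the subcase inside that rerun where some $2$-generator subgroup $H_{i,j,k}=\langle a_i,[a_j,a_k]\rangle$ of $S$ has $L=H_{i,j,k}\cap R$ infinitely generated; there Proposition 2.4 gives only $\omega(H_{i,j,k})\geq 2^{1/4}$, and hence $\omega(S,A)\geq 2^{1/16}$. The key observation is that $L$ is still a subgroup of $\Gamma_g$ of infinite rank, so is free of infinite rank, and $H_{i,j,k}=L\rtimes\z$ is itself a finitely generated subgroup of a free-by-cyclic group. Proposition 4.1 then upgrades the bound to $\omega(H_{i,j,k})\geq 3$, so $\omega(S,A)\geq 3^{1/4}>3^{1/6}$. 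All remaining subcases of the proof of Theorem 3.1 already give $\omega(S,A)\geq u^{1/6}=3^{1/6}$ directly, so combining the cases yields $\omega(S)\geq 3^{1/6}$ for every non-exceptional $S$.
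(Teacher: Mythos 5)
Your proposal is correct and follows essentially the same route as the paper: verify the hypotheses of Theorem 3.1/Corollary 3.2 with $u=3$ using the fact that finite-index subgroups of $\Gamma_g$ are surface groups and infinite-index ones are free, then upgrade the only weak subcase (where the kernel $L$ is infinitely generated, hence free of infinite rank) via Proposition 4.1 to get $\omega(H_{i,j,k})\geq 3$ and thus $\omega(S)\geq 3^{1/4}>3^{1/6}$. The paper's proof is exactly this argument, stated slightly more tersely.
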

\begin{proof} It is well known that the subgroups of $\Gamma_g$ are 
isomorphic to $\Gamma_h$ in the finite index case and are free if 
they are of infinite index. Therefore the conditions of Theorem 3.1
and Corollary 3.2 apply with $u=3$. By the proof of Theorem 3.1,
we either have that one of the subgroups $H_{i,j,k}$ of $S$ lies in $\Gamma_g$
and is non-cyclic, in which case we have $\omega(S)\geq
u^{1/4}$, or for some $i,j,k$ we have that the kernel
$L$ is finitely generated and
one of $L_{\pm 1}$ is non-cyclic so $\omega(S)\geq u^{1/6}$, or
$L$ is infinitely generated. If that occurs then $H_{i,j,k}$ is 
(free of infinite rank)-by-$\z$ and so $\omega(H_{i,j,k})\geq 3$ by 
Proposition 4.1, giving $\omega(S)\geq 3^{1/4}$.
\end{proof}

The groups $G$ in Corollary 4.2 are all
fundamental groups of closed 3-manifolds,
where the 3-manifold is fibred over the circle. In \cite{fdc} it is shown
(using Geometrisation) that we have uniform uniform growth for the class
of fundamental groups of closed 3-manifolds, with the exception of the
virtually nilpotent groups.

A much wider class of groups (including free and surface groups) 
where every subgroup
is either cyclic or contains a non-abelian free group is the class of
torsion free word hyperbolic groups (note that a torsion free virtually cyclic
group must be $\z$ or $\{e\}$, for instance by \cite{hemp} Lemma 11.4).
Therefore Theorem 3.1 and Corollary 3.2 will apply to such a group $G$
whenever we have a lower bound away from 1 of the growth constant
of all non-cyclic
finitely generated subgroups. We present two cases where this would be so.
\begin{co} Let $K$ be a torsion free word hyperbolic linear group. Then there
exists a constant $k>1$ such that if 
$G=K\rtimes\z$ is any non-proper HNN extension of $K$ then we have 
$\omega(S)\geq k$ for any finitely generated subgroup $S$ of $G$ that is
not isomorphic to $\{e\}$, $\z$, $\z\times\z$ or the Klein bottle group.
\end{co}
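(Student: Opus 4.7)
The plan is to verify that $K$ satisfies the hypothesis of Corollary 3.2 and then read off the bound. We must produce a uniform $u>1$ such that every finitely generated subgroup $H\leq K$ is either cyclic or has $\omega(H)\geq u$; given this, Corollary 3.2 immediately yields $\omega(S)\geq\mbox{min}(u^{1/6},2^{1/16})$ for all finitely generated $S\leq G$ outside the listed four groups, and we can take $k$ to be this minimum.

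Let $H$ be a finitely generated subgroup of $K$. Since $K$ is torsion free word hyperbolic, $H$ is either virtually cyclic or contains a non-abelian free subgroup; the torsion free condition forces a virtually cyclic subgroup to be $\{e\}$ or $\z$ (as remarked above the corollary, using \cite{hemp} Lemma 11.4). So either $H$ is cyclic (and we are done) or $H$ contains $F_2$, in which case $H$ is far from virtually soluble.

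Now the linearity hypothesis enters. Fix a faithful linear representation of $K$ in dimension $d$ over some field; every finitely generated subgroup $H\leq K$ is then a linear group of dimension $d$ over that same field. Breuillard's Corollary 1.2 in \cite{breu} supplies a constant $k_d>1$ (depending only on $d$) such that every non virtually soluble linear group in dimension $d$ over any field has growth constant at least $k_d$. Applied to an $H$ containing $F_2$, this gives $\omega(H)\geq k_d$.

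Thus the hypothesis of Corollary 3.2 holds with $u=k_d$, and the conclusion is $\omega(S)\geq\mbox{min}(k_d^{1/6},2^{1/16})$ for every finitely generated $S\leq G$ that is not one of $\{e\}$, $\z$, $\z\times\z$ or the Klein bottle group. Taking $k$ to be this minimum completes the proof. The only non-routine point is the appeal to \cite{breu} to obtain a uniform growth bound on the non-cyclic finitely generated subgroups of $K$; once this is in place, the machinery of Section 3 does all the remaining work.
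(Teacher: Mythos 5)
Your proof is correct and follows essentially the same route as the paper: use the torsion free word hyperbolic hypothesis to see that every non-cyclic finitely generated subgroup of $K$ contains $F_2$ (so is not virtually soluble), invoke Breuillard's uniform bound $c(d)>1$ for non virtually soluble linear groups of dimension $d$, and then apply Corollary 3.2 with $u=c(d)$. The paper's proof is a condensed version of exactly this argument.
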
 
\begin{proof} By the results \cite{breu}, \cite{brgl}
of Breuillard and Breuillard-Gelander, we
have that for any integer $d>1$ there is a constant $c(d)>1$ such that if
K is a finitely generated non-virtually solvable subgroup of 
$GL_d(\mathbb F)$, where $\mathbb F$ is any field, then $\omega(K)\geq c(d)$.
But if $K$ is also torsion free and word hyperbolic then any non-cyclic
finitely generated subgroup $H$ (though it may not be word hyperbolic) will 
also have $\omega(H)\geq c(d)$. Therefore Corollary 3.2 applies with
$u=c(d)$.
\end{proof}
Note that although $G$ above will be torsion free, it will not in general
be word hyperbolic because it could easily contain $\z\times\z$, as
discussed in the next section. The question
of whether $G$ is linear seems interesting; it is open even in the case
where $K$ is the free group $F_r$ for $r\geq 3$.

Another result that could be of use here is that of Arzhantseva and Lysenok
in \cite{arzl}. This states that if $K$ is a word hyperbolic group then
there is an $\alpha>0$, effectively computable from $K$, such that for
any finitely generated subgroup $H$ of $K$ which is not virtually cyclic
and any finite generating set $C$ for $H$, we have 
$\omega(H,C)\geq\alpha|C|$. This implies that if $K$ is torsion free word
hyperbolic and the value of $\alpha$ above is greater than $1/2$ then
Corollary 3.2 applies to $K$ because when $H$ is non-cyclic we have 
$|C|\geq 2$ and so $\omega(H,C)\geq u=2\alpha >1$.

We should say that if a torsion free 1-ended word hyperbolic group $H$ has
trivial JSJ decomposition then Out$(H)$ is finite and therefore any group
of the form $H\rtimes_\alpha \z$ has uniform exponential growth. This is
because if $\alpha$ has order $a$ in Out$(H)$ then the degree $a$ 
cyclic cover of $H\rtimes_\alpha\z$ is isomorphic to $H\times\z$ and so
$\omega(H\rtimes\z)^{2a-1}\geq\omega(H\times\z)\geq\omega(H)>1$.
However $a$ could be arbitrarily high and moreover if Out($H$) is infinite 
then this argument will only apply to the automorphisms of finite order.

Finally we wish to display some examples which are not word hyperbolic;
indeed which are as far away from being word hyperbolic as possible.
For non-zero integers $m,n$ let $BS(m,n)$ be the Baumslag-Solitar group
with presentation $\langle a,t|ta^mt^{-1}=a^n\rangle$. These are
``poison'' subgroups in the sense that any group containing one of these
cannot be word hyperbolic. However $BS(1,n)$ does not
contain $\z\times\z$ if $n\neq\pm 1$. Thus we obtain:
\begin{co}
Let $G$ be any non-proper HNN extension of $B=BS(1,n)$ for $n\neq\pm 1$ then
we have $\omega(S)\geq 2^{1/24}$ for 
any finitely generated subgroup $S$ of $G$ that is
not isomorphic to $\{e\}$, $\z$, $\z\times\z$ or the Klein bottle group.
\end{co}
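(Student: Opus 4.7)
The plan is to invoke Corollary 3.2 with $K = B = BS(1,n)$. Since $BS(1,n) \cong \z[1/n] \rtimes_n \z$ is finitely generated and torsion-free, the only thing that needs verification is the hypothesis that every finitely generated subgroup $H$ of $BS(1,n)$ is either cyclic or satisfies $\omega(H) \geq u$ for some uniform constant $u > 1$. I will aim for $u = 2^{1/4}$, since then Corollary 3.2 supplies the stated bound $\min(u^{1/6}, 2^{1/16}) = 2^{1/24}$.

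Given a finitely generated non-cyclic $H \leq BS(1,n)$, I would restrict the natural surjection $BS(1,n) \to \z$ to $H$, producing $\chi: H \to \z$ with kernel $L = H \cap \z[1/n]$. If $\chi$ is trivial, then $H$ is a finitely generated subgroup of the locally cyclic group $\z[1/n]$, hence cyclic, contradicting the assumption. So $\chi$ is non-trivial, and the task reduces to showing $L$ is infinitely generated, whereupon Proposition 2.4 gives $\omega(H) \geq 2^{1/4}$.

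The substantive step is ruling out $L$ being finitely generated. If it were, then as a finitely generated subgroup of the rank-one torsion-free abelian group $\z[1/n]$ it would equal $\{e\}$ or some $\z \cdot x_0$; the first makes $H \cong \z$, and in the second, a preimage $y = (q,k) \in H$ of a generator of $H/L$ (necessarily with $k \neq 0$) conjugates $x_0$ to $n^k x_0$, forcing $n^k \in \z$, while the same computation applied to $y^{-1}$ forces $n^{-k} \in \z$. Since $|n| \geq 2$ these together force $k=0$, a contradiction. This rank-one argument, which is essentially what prevents $\z \times \z$ from sitting inside $BS(1,n)$ for $|n| \geq 2$, is the only genuinely delicate point; with it in hand, Corollary 3.2 immediately yields the conclusion.
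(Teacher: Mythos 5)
Your proposal is correct and follows essentially the same route as the paper: verify the hypothesis of Corollary 3.2 with $u=2^{1/4}$ by projecting a finitely generated subgroup $H$ of $BS(1,n)$ onto $\z$, using Proposition 2.4 when $H\cap\ker\chi$ is infinitely generated, and excluding the finitely generated non-trivial kernel case. The only cosmetic difference is that you prove the non-embedding of $\z\rtimes\z$ explicitly via the eigenvalue computation in $\z[1/n]\rtimes_n\z$, where the paper simply cites that $BS(1,n)$ contains no $\z\times\z$ for $n\neq\pm 1$.
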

\begin{proof}
The group $B$ is a strictly ascending HNN extension with base $\z$ so the
associated homomorphism has a kernel which is not finitely generated but
which is an ascending union of copies of $\z$, so is locally $\z$. 
If $H$ is a finitely generated subgroup of $B$ which is not in this
kernel, then if it intersects the kernel in an infinitely generated subgroup
we have $\omega(H)\geq 2^{1/4}$. If however the intersection is finitely
generated then it must be $\{e\}$ or $\z$, so $H$ is $\z$, $\z\times\z$
or the Klein bottle group but we have ruled out the last two. Therefore
Corollary 3.2 applies with $u=2^{1/4}$.
\end{proof}

The above discussion also applies to generalised Baumslag-Solitar groups.
These are finitely generated groups which act on a tree with all edge
and vertex stabilisers infinite cyclic. If such a group does not contain
$\z\times\z$ then Corollary 4.4 will apply. To see this we note that any
generalised Baumslag-Solitar group has a surjection to $\z$ and any
finitely generated subgroup is either free or also a generalised
Baumslag-Solitar group. If the kernel of this surjection is infinitely
generated then we have the $2^{1/4}$ bound but if the kernel is finitely
generated then we use the fact that generalised Baumslag-Solitar groups
have cohomological dimension 2 and are coherent. By the result of Bieri
mentioned before in Proposition 2.3, we have that this kernel must be
free and hence we get in this case a lower bound of $3^{1/6}$ once we
have ruled out $\z\times\z$ and the Klein bottle.

The paper \cite{lev} describes the automorphism group of a
generalised Baumslag Solitar group, which in some cases can be a big
group. It also quotes the results used above and provides references.

\section{Periodic conjugacy classes}

We know that a group containing $\z\times\z$ cannot be word hyperbolic and
so in general $G=K\rtimes_\alpha\z$ will not be word hyperbolic even if $K$
is. We can see this because if the automorphism $\alpha$ has a periodic
conjugacy class, that is there exists a non-identity element $k$ of $K$
and a non-zero integer $n$ such that $\alpha^n(k)$ is conjugate by $c\in K$
to $k$, then $t^nkt^{-n}=ckc^{-1}$ where conjugation by $t$ 
acts as $\alpha$ on $K$.
Thus $c^{-1}t^n$ commutes with $k$ and if $K$ is torsion free then this
must generate $\z\times\z$ because the image in $\z$ of any power of 
$c^{-1}t^n$ is never trivial so no power can lie in $K$. In fact the
converse is true too.

\begin{prop} Let $K$ be a group that does not contain $\z\times\z$ and
let $\alpha$ be an automorphism of $K$. Then if $G=K\rtimes_\alpha\z$ 
contains
$\z\times\z$, we have that $\alpha$ has a periodic conjugacy class.
\end{prop}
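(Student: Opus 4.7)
The plan is to suppose $\z \times \z \leq G$ with commuting generators $g_1, g_2$, and then produce from them a non-trivial element $k \in K$ together with an element of $G$ of nonzero $t$-exponent that commutes with $k$. Once we have that, the periodic conjugacy class falls out by the same short computation used in the paragraph preceding the proposition.

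Concretely, write each generator uniquely as $g_i = k_i t^{n_i}$ with $k_i \in K$ and $n_i \in \z$, using the associated homomorphism $\chi: G \to \z$ sending $t$ to $1$ and $K$ to $0$. Since $K$ contains no copy of $\z \times \z$, we cannot have $n_1 = n_2 = 0$, so WLOG $n_1 \neq 0$. Now I would replace $g_2$ by the commutator-friendly combination
\[
h = g_2^{n_1} g_1^{-n_2},
\]
which still lies in the abelian subgroup $\langle g_1, g_2 \rangle \cong \z \times \z$ and satisfies $\chi(h) = n_1 n_2 - n_2 n_1 = 0$, so $h \in K$. The key observation is that $h \neq e$: in the basis $g_1, g_2$ of $\z^2$, the element $h$ corresponds to $(-n_2, n_1) \neq (0,0)$.

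Now $h \in K \setminus \{e\}$ commutes with $g_1 = k_1 t^{n_1}$, so
\[
h = g_1 h g_1^{-1} = k_1 t^{n_1} h t^{-n_1} k_1^{-1} = k_1 \alpha^{n_1}(h) k_1^{-1},
\]
yielding $\alpha^{n_1}(h) = k_1^{-1} h k_1$. Since $n_1 \neq 0$ and $h \neq e$, this exhibits $h$ as a periodic conjugacy class of $\alpha$, as required.

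I do not anticipate a serious obstacle; the main thing to verify carefully is that the auxiliary element $h$ is genuinely non-trivial, which is where the hypothesis that $g_1, g_2$ form a $\z \times \z$ (rather than merely commuting) is used. The rest is a direct computation using $tkt^{-1} = \alpha(k)$ and the normal form for the semidirect product.
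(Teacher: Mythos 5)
Your proof is correct and follows the same route as the paper: express the two commuting generators in normal form $k_i t^{n_i}$, observe at least one $n_i$ is nonzero, form the $\chi$-kernel combination $g_2^{n_1}g_1^{-n_2}$ (the paper's $z=x^jy^{-i}$ is just its inverse), check it is a nontrivial element of $K$ via the $\z^2$ coordinates, and conjugate by $g_1$ to extract $\alpha^{n_1}(h)=k_1^{-1}hk_1$. The only difference is that you spell out the non-triviality of $h$ and the final conjugation more explicitly than the paper does.
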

\begin{proof}
We take two generators of $\z\times\z$ in $G$ which we can write as
$x=kt^i$ and $y=lt^j$ for $k,l\in K$. Without loss of generality $i\neq 0$
because if both are then $x,y\in K$. We then set $z=x^jy^{-i}$ which (on
taking the exponent sum of $t$ in $z$) will lie in $K$ and will not equal
the identity. But then $xzx^{-1}=z$ implies that $\alpha^i(z)=k^{-1}zk$.
\end{proof}

However it is far from immediate that an absence of $\z\times\z$ in $G$
along with $K$ being word hyperbolic is enough to imply that $G$ is too. 
Even when
$K$ is the free group $F_n$ one requires the combined results of
\cite{bf}, \cite{bfad} and \cite{brink} to prove this. Even so, as word
hyperbolicity is not explicitly mentioned in Theorem 3.1 or Corollary 3.2,
we can use the absence of periodic conjugacy classes to obtain iterated
versions of these results.
\begin{co} Let $K$ be a finitely generated torsion-free group 
where there exists $u>1$ such that if $H$ is a finitely generated
subgroup of $K$ then either $\omega(H)\geq u$ or $H$ is cyclic. 
Let $\alpha$ be an automorphism of $K$ without periodic conjugacy classes
and let $G=K\rtimes_\alpha\z$.
Now let $\beta$ be any automorphism of $G$ and let us form the repeated 
non-proper HNN extension 
$D=(K\rtimes_\alpha\z)\rtimes_\beta\z$.
Then there exists $w>1$ such that 
for any finitely generated subgroup $S$ of $D$ we have that $S$ has
uniform exponential growth with $\omega(S)\geq w$ or
$S$ is $\{e\}$,$\z$, $\z\times\z$ or the Klein bottle group.
\end{co}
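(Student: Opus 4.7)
The plan is to bootstrap Corollary 3.2 and apply it twice, using Proposition 5.1 as the crucial ingredient that lets the exceptional subgroups $\z\times\z$ and the Klein bottle group be eliminated one level down. The hypotheses on $K$ already match those of Corollary 3.2, so applied to $G=K\rtimes_\alpha\z$ that corollary tells us every finitely generated subgroup of $G$ is either isomorphic to $\{e\}$, $\z$, $\z\times\z$, or the Klein bottle group, or else has growth constant at least $u_1:=\mbox{min}(u^{1/6},2^{1/16})>1$. My goal is to upgrade this by showing the third and fourth possibilities cannot occur, so that $G$ in turn satisfies the hypotheses of Corollary 3.2, and then apply Corollary 3.2 once more to $D=G\rtimes_\beta\z$.

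First I would verify that $K$ contains no copy of $\z\times\z$. Indeed $\z\times\z$ is not cyclic and satisfies $\omega(\z\times\z)=1$ since it has polynomial growth, so the uniform growth hypothesis with constant $u>1$ prevents $\z\times\z$ from embedding in $K$. Since the Klein bottle group contains $\z\times\z$ as an index $2$ subgroup, it is also excluded from $K$. Next, because $\alpha$ has no periodic conjugacy classes by assumption, the contrapositive of Proposition 5.1 applied to $K$ (which is $\z\times\z$-free) shows that $G$ itself contains no $\z\times\z$, hence no Klein bottle group either.

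Combining this with the previous paragraph, every finitely generated subgroup $T$ of $G$ is either trivial, infinite cyclic, or satisfies $\omega(T)\geq u_1$. Moreover $G$ is finitely generated and torsion free (any torsion element of $K\rtimes_\alpha\z$ would have to map trivially to $\z$, hence lie in the torsion free group $K$). Therefore $G$ satisfies the hypotheses of Corollary 3.2 with the constant $u_1$ in place of $u$. Applying Corollary 3.2 to the non-proper HNN extension $D=G\rtimes_\beta\z$, we conclude that any finitely generated subgroup $S$ of $D$ is isomorphic to $\{e\}$, $\z$, $\z\times\z$, or the Klein bottle group, or else satisfies $\omega(S)\geq w:=\mbox{min}(u_1^{1/6},2^{1/16})=\mbox{min}(u^{1/36},2^{1/96})>1$, as required.

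The only real content lies in the second paragraph; everything else is book-keeping. The main subtlety worth highlighting is that \emph{no} hypothesis on $\beta$ (such as absence of periodic conjugacy classes) is needed at the outer level: the four exceptional subgroups are allowed to appear inside $D$. The no-periodic-conjugacy-classes hypothesis on $\alpha$ is used only to forbid $\z\times\z$ inside the middle group $G$, which is precisely what is needed to make $G$ fit the template required by the next application of Corollary 3.2.
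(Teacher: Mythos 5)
Your proof is correct and follows essentially the same route as the paper: apply Corollary 3.2 to $G=K\rtimes_\alpha\z$, use Proposition 5.1 (via the absence of periodic conjugacy classes for $\alpha$) to rule out $\z\times\z$ and the Klein bottle group as subgroups of $G$, and then apply Corollary 3.2 a second time to $D$. Your write-up is in fact slightly more careful than the paper's, since you explicitly check the hypothesis of Proposition 5.1 that $K$ itself contains no $\z\times\z$ and you record an explicit value of $w$.
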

\begin{proof}
On applying Corollary 3.2 to $K$, we obtain that $G$ is
finitely generated and torsion free, with $v>1$ such that any finitely
generated subgroup $H$ of $G$ having $\omega(H)<v$ must be cyclic or
isomorphic to $\z\times\z$ or the Klein bottle. But the last two cases
are ruled out by Proposition 5.1 if $\alpha$ has no periodic conjugacy
classes. Now we can apply Corollary 3.2 again but this time to $G$.
\end{proof}
In particular, by the above and Proposition 4.1,  
if $G=F_n\rtimes\z$ is a word hyperbolic free-by-cyclic group
then any finitely generated
subgroup $D$ of a group of the form $G\rtimes\z$ has
$\omega(D)\geq 3^{1/36}$. The same is true by Corollary 4.2
if $G$ is the fundamental group
of a closed hyperbolic 3-manifold which is fibred over the circle.
Also if we have any number of repeated non-proper
HNN extensions of a group $K$ satisfying the conditions of Theorem 3.1
where all automorphisms (except possibly the last) have no periodic
conjugacy classes then the resulting group has uniform exponential growth
with constant depending only on $K$ and the number of HNN extensions.

We might hope to develop versions of Theorem 3.1 and Corollary 3.2 where
the allowable ``small groups'' are more than just cyclic, for instance we
could permit free abelian groups of bounded rank. The main problem here
appears to be finding an ``exit strategy'' in the sense that we may
always find that $L$ in the proof turns out to be abelian but it is
hard to see what this implies for the whole group $G$. Nevertheless the
absence of periodic conjugacy classes allows us to get round this.

\begin{thm} Let $K$ (not equal to $\{e\}$)
be a finitely generated torsion-free group 
where there exists $u>1$ and $d\in\n$ such that if $H$ is a 
finitely generated 
subgroup of $K$ then either $\omega(H)\geq u$ or $H\cong\z^n$ for $n\leq d$.
Then there exists $k>1$ depending only on $u$ and $d$ such that
for any non-proper HNN extension $G=K\rtimes_\alpha \z$ where $\alpha$
has no periodic conjugacy classes, we have $\omega(G)\geq k$.
\end{thm}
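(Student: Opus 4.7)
The plan is to follow the structure of the proof of Theorem~3.1, but to push the nested chain $L_n = \langle x_0, \ldots, x_n\rangle$ out to depth $d$ rather than depth $1$, and to replace the final appeal ``$K$ must be $\{e\}$ or $\z$'' by the hypothesis that $\alpha$ has no periodic conjugacy classes. Given a finite generating set $A = \{a_1, \ldots, a_l\}$ for $G$, I form the 2-generator subgroups $H_{i,j,k} = \langle a_i, [a_j, a_k]\rangle$, so that $\omega(G,A) \geq \omega(H_{i,j,k})^{1/4}$. If $a_i \in K$, then $H_{i,j,k} \leq K$ is 2-generator, so either $\omega(H_{i,j,k}) \geq u$ (done, with bound $u^{1/4}$) or $H_{i,j,k} \cong \z^n$ with $n \leq 2$, yielding that $a_i$ commutes with $[a_j,a_k]$. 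If $a_i \notin K$, I set $t = a_i$, $y = [a_j,a_k]$, and $L = H_{i,j,k}\cap K$; Proposition~2.4 disposes of the case where $L$ is infinitely generated with bound $\omega(G,A) \geq 2^{1/16}$.

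Assuming $L$ is finitely generated, I examine the chain $L_n = \langle x_0, \ldots, x_n\rangle$ for $n = 0, 1, \ldots, d$, where $x_n = t^n y t^{-n}$ has length at most $2n+4$ in $A$. If some $L_n$ is not of the form $\z^r$ with $r \leq d$, then by the hypothesis on $K$ we have $\omega(L_n) \geq u$, and since $L_n$ is generated by elements of length at most $2d+4$ in $A$ this yields $\omega(G,A) \geq u^{1/(2d+4)}$. Otherwise every $L_n$ is free abelian of rank at most $d$, and the $d+1$ elements $x_0, \ldots, x_d$ in the rank-$\leq d$ group $L_d$ satisfy a non-trivial integer relation $\sum_{n=0}^{d} c_n x_n = 0$. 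Running the Alexander-polynomial analysis from Theorem~3.1 on the homomorphism $\chi: H_{i,j,k} \to \z$ now shows that $L$ itself is free abelian of some rank $r \leq d$, with $t$ acting via a matrix $M_i \in GL_r(\z)$ whose characteristic polynomial is monic at both ends. The no-periodic-conjugacy-classes hypothesis enters here: writing $a_i = k_i t^{n_i}$ with $n_i \neq 0$, conjugation by $a_i$ on $K$ equals $\alpha^{n_i}$ followed by an inner automorphism of $K$, so any non-trivial $M_i$-periodic vector $v \in L$ would yield $\alpha^{n_i N}(v) = c^{-1} v c$ for some $c \in K$, a periodic conjugacy class. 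Hence $M_i$ has no root-of-unity eigenvalues; in particular $r \neq 1$ (the only elements of $GL_1(\z)$ are $\pm 1$, both roots of unity), so either $y = e$ or $r \geq 2$.

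The endgame, which is the main obstacle, is to combine these per-triple constraints into a uniform lower bound depending only on $u$ and $d$. Paralleling the Fitting-type argument of Theorem~3.1, I would show that the commutators $[a_j, a_k]$ all lie in a common $G$-normal abelian subgroup $P \leq K$; by the hypothesis on $K$, such a $P$ is forced to be of the form $\z^s$ with $s \leq d$. Then $G/P$ is abelian (every commutator of generators lies in $P$), so $G$ is polycyclic metabelian, and the absence of periodic conjugacy classes forces the $G$-action on $P$ to be aperiodic. This reduces matters to a group of the form $\z^s \rtimes_\beta \z$ with $\beta \in GL_s(\z)$ aperiodic, for which Kronecker's theorem guarantees an eigenvalue of modulus strictly greater than one; an explicit free-sub-semigroup argument, or alternatively a direct appeal to Osin's uniform exponential growth results for solvable groups of exponential growth, then provides a lower bound on $\omega$ depending only on $s$, hence only on $d$. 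The delicate part is producing the genuinely $G$-normal abelian subgroup $P$ from the family of abelian subgroups $L^{(i,j,k)}$ (which depend on the conjugating generator $a_i$ as well as on $j,k$): the abelian sheet containing $[a_j,a_k]$ varies with $i$, so some inductive bookkeeping, together with careful use of the rank-at-most-$d$ hypothesis on $K$ to prevent ranks from blowing up during the iteration, will be needed to produce a single $G$-invariant free abelian quotient target.
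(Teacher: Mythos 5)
Your outline tracks the paper up to the point where you conclude that $L$ is free abelian of rank $r\leq d$ with $t$ acting by an aperiodic matrix, but the endgame you propose is both harder than necessary and incomplete in a way that matters. The paper does not build a $G$-normal abelian subgroup $P$, and you do not need to combine constraints across triples at all. The crucial observation is that the no-periodic-conjugacy-classes hypothesis already furnishes a good triple: if every pair $a_j,a_k$ commuted then $G$ would be abelian, so $\alpha$ would be the identity and (since $K\neq\{e\}$) would trivially have a periodic conjugacy class. Hence there exist $j,k$ with $[a_j,a_k]\neq e$, and of course some $a_i\notin K$; running the per-triple argument on this \emph{single} $H_{i,j,k}$ gives either $\omega(G,A)\geq u^{1/(2d+4)}$, or $H_{i,j,k}\cong\z^r\rtimes_M\z$ with $M$ aperiodic, or an infinitely generated kernel. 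There is nothing to ``glue,'' and the Fitting-style step of Theorem~3.1 is precisely what the periodic-conjugacy hypothesis was brought in to avoid.

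Your second gap is quantitative. Kronecker only gives $m>1$ for the spectral radius, and an appeal to Osin's theorem only gives uniform exponential growth for each individual group $\z^r\rtimes_M\z$ — neither produces a constant $k>1$ that works \emph{simultaneously} over all aperiodic $M\in GL_r(\z)$ with $r\leq d$, since the eigenvalues can approach the unit circle. The paper closes this by invoking the Blanksby--Montgomery bound: a monic integer polynomial of degree at most $d$ whose roots all lie in $\{|z|\leq 1+1/(30d^2\log 6d)\}$ has only roots of unity; so aperiodicity forces $m\geq 1+1/(30d^2\log 6d)$, a bound depending only on $d$. Without some such Lehmer-type estimate for bounded degree you do not get a $k$ depending only on $u$ and $d$.

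Finally, the passage from ``every $L_n$ is $\z^{r_n}$ with $r_n\leq d$'' to ``$L$ is free abelian'' needs more than the Alexander polynomial. The chain can stick (same rank, proper finite-index inclusion) and then later leave the abelian category, and showing this cannot happen requires the paper's separate argument working successively modulo the centre of $L$ to force nilpotence, hence (back in $K$) abelianness. Your one-sentence gloss ``the Alexander-polynomial analysis shows $L$ is free abelian'' buries a genuine case split.
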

\begin{proof}     
We follow the proof of Theorem 3.1 and indicate where we need to strengthen
it. Suppose we have $H_{i,j,k}$ not contained in $K$ with finitely
generated kernel $L\neq\{e\}$. 
(Note that there will be some generator $a_i$ of $G$ which
is not in $K$, and some pair of generators $a_j,a_k$ which do not commute
or else $G$ is abelian and so has periodic conjugacy classes.
Thus this case will occur for any generating set.)
One considers as before the ascending sequence 
of subgroups 
$L_0\cong\z,L_1,L_2,\ldots$. Suppose that $L_{s-1}$ is (free)
abelian then we must have on moving from $L_{s-1}$ to $L_s$ that either
$L_s$ is free abelian of one rank higher, or $L_s$ and all further
$L_{s+1},L_{s+2},\ldots$ are no longer abelian, or we ``stick'', which is
where $L_s$ remains abelian but has the same rank as $L_{s-1}$. 

Suppose we never stick but drop out of the abelian category then we must
have $L_s$ non-abelian for $s\leq d$, in which case we have $\omega(L_s)
\geq u$ for $L_s=\langle x,txt^{-1},\ldots ,t^sxt^{-s}\rangle$ with each
element of this generating set having length at most $2d+4$ in $A$, so
$\omega(G,A)\geq u^{1/(2d+4)}$.

Now let us see what happens if we do stick, supposing that $s$ is the
first time this has occurred so that $L_{s-1}\cong L_s\cong\z^s$ with 
$s\leq d$. If $L_{s-1}=L_s$ then we have $L_{s+1}=L_{s+2}=\ldots$. We now
repeat our argument in the other direction, starting with
$M_0=\langle x_{s-1},x_{s-2},\ldots ,x_0\rangle$ and 
$M_1=\langle M_0,x_{-1}\rangle$,
$M_2=\langle M_1,x_{-2}\rangle$ and so on.
Once again we either increase the rank by 1 but
can only do this a maximum of $d-s$ times, or we are no longer abelian
but then the above bound again applies for $\omega(G,A)$, or we again
stick. If further we have $M_{t-1}=M_t$ in our new sequence of subgroups
then we have found that $L$ is isomorphic to $\z^n$ for $n\leq d$ and 
consequently the subgroup $H_{i,j,k}=\langle t,x\rangle$ is 
$\z^n\rtimes_\beta\z$ where conjugation by $t$ induces the automorphism
$\beta$ of $\z^n$.

Now a lot is known about the word growth of groups of this form. Let $M$ be
the corresponding element of $GL(n,\z)$ obtained from $\beta$. Set $m$ to
be the modulus of the largest eigenvalue of $M$; by a famous result of
Kronecker we have that $m>1$ unless all solutions of the characteristic
equation are roots of unity. In the former case our group $H_{i,j,k}$
has uniform exponential growth with growth constant that depends only
on $m$ and this again translates into a lower bound for $\omega(G,A)$.
Moreover as $m$ varies, this growth constant is bounded away from 1 
if $m$ is; see for instance the example in \cite{bdlh}. 
Whether or not our growth
constants are bounded away from 1 over all $m$ would be implied by a
positive answer to a longstanding question of Lehmer on Mahler measure. 
However we are in a position where $M$ is an $n$ by $n$ matrix for $n\leq d$.
When the degree of the polynomial is bounded we can use \cite{blamnt} which
states that if $p$ is a monic polynomial in $\z[t]$ of degree at most $d$
with $m\leq 1+1/(30d^2\mbox{ log }6d)$ 
then the solutions of $p$ are all roots of
unity, where $m$ is the modulus of the largest root.
Consequently we have uniform exponential growth for $H_{i,j,k}$
with growth constant that depends only on $d$. 

In the case where all eigenvalues are roots of unity our group
is virtually nilpotent. Let one of these eigenvalues have order $r$ then
the subgroup $\langle t^r,x\rangle$ 
of $H_{i,j,k}$ corresponds to the matrix $M^r$ which
will have 1 as an eigenvalue. Consequently there will exist a non-zero
$v\in\q^n$, and
also in $\z^n$ by clearing out denominators, with $M^rv=v$. We can think of
this $v\in\z^n$ as an element of $H_{i,j,k}\cap K$, for $K$ the kernel of
the natural homomorphism from $G$ to $\z$. 
If $s\in G$ induces the automorphism
$\alpha$ of $K$ by conjugation then we have $t=ks^i$ for $k\in K$ and
$i\neq 0$, obtaining $t^r=ls^{ir}$ for $l$ also in $K$. 
Thus we have $v=t^rvt^{-r}=ls^{ir}vs^{-ir}l^{-1}$ so 
$\alpha^{ir}(v)=l^{-1}vl$, giving us a periodic conjugacy class for $\alpha$. 

We now have to consider what happens when we stick for the first time at
$s$, so that $L_{s-1}\cong L_s\cong\z^s$, but where $L_{s-1}$ is not
equal to $L_s$. We will have $L_{s-1}$ of finite index in $L_s$ and a
relation holding of the form $x_0^{\alpha_0}\ldots x_s^{\alpha_s}=e$ where
(as $L_s$ is free abelian) there is no common factor of the integers
$\alpha_0,\ldots ,\alpha_s$ and (as $L_{s-1}<L_s$) $\alpha_s\neq\pm 1$.
As this relation continues to hold by conjugation amongst
$x_1,\ldots ,x_{s+1}$ and so on, we must have that either $L_{s+1}$ is
no longer abelian or it is abelian with $L_s\leq_f L_{s+1}$.

In the case where we never reach a non-abelian subgroup, the previous
proof generalises in a straightforward manner. We will have an ascending
chain of finite index subgroups until we terminate at $L_t=L\cong\z^s$.
Now we invoke the Alexander polynomial argument again which is that it must
be a monic polynomial (as $L$ is finitely generated) of degree
$\beta_1(L;\q)=s$. But it must also divide 
$\alpha_0+\alpha_1t+\ldots +\alpha_st^s$ which is a contradiction.

However the case where along the way we stop being abelian is
more awkward. We suppose that $L=\langle x_0,x_1,\ldots ,x_M\rangle$ for
some large $M$ and is non-abelian, with 
\begin{equation}
x_0^{\alpha_0}\ldots x_s^{\alpha_s}=e 
\end{equation}
holding as before. Our previous
argument immediately generalises to showing that $x_i^b$
is in the centre $Z(L)$ for $b$ a high power of $\alpha_0$, and
similarly for a high power of
$\alpha_s$. If $\alpha_0$ and $\alpha_s$ are coprime
then we are immediately done but this need not be so, thus we have to proceed
more carefully.

Let $c$ be the highest common factor of $\alpha_0$ and $\alpha_s$. Using
the old argument, we know
that there is some integer $a$ such that for all $0\leq i\leq M$, 
we have $x_i$ raised to the power of $c^a$
is in the centre $Z(L)$. Let us now work modulo the
centre and take a prime $p$ dividing $c$, so that there exists $r$ coprime
to $p$ and a potentially high integer $k$ with $x_i^{rp^k}=e$. We then
raise the equation (1) to the power of $rp^{k-1}$ so that if $j_1,
\ldots ,j_l$ are the values of $i$ where $p$ does not divide $\alpha_i$,
we have 
\[x_{j_1}^{\alpha_{j_1}rp^{k-1}}\ldots x_{j_l}^{\alpha_{j_l}rp^{k-1}}=e.\]
Consequently $x_{j_l}^{\alpha_{j_l}rp^{k-1}}$ is in the subgroup
generated by $x_i^{rp^{k-1}}$ for values of $i$ lower than $j_l$. But
$\alpha_{j_l}$ is coprime to $p$ so in fact $x_{j_l}^{rp^{k-1}}$ is in this
subgroup too. But now on conjugating this relation so that $j_l=M$ and
working down, we obtain that $x_M^{rp^{k-1}}$ is in the centre of $L/Z(L)$.
We can now quotient out by the new centre, obtaining $x_i^{rp^{k-1}}=e$
and then repeat the argument with $k$ lowered by 1. Continuing
in this way we reduce $k$ until after successive quotients we have
$x_i^r=e$. We can now replace $c$ with $r$, pull out another prime from
$r$ and repeat the argument, always quotienting out by the centre.
Eventually we have $x_i=e$ so that $L$ is actually nilpotent. 
As it is a subgroup of $K$, we conclude that $L$ was abelian all along. 
\end{proof}

Again we have our usual corollary.
\begin{co}
Let $K$ be a finitely generated torsion-free group 
where there exists $u>1$ and $d\in\n$ such that if $H$ is a 
finitely generated 
subgroup of $K$ then either $\omega(H)\geq u$ or $H\cong\z^n$ for $n\leq d$.
Then there exists $k>1$ depending only on $u$ and $d$ such that
for any non-proper HNN extension $G=K\rtimes_\alpha \z$ where $\alpha$
has no periodic conjugacy classes, and any finitely generated subgroup
$S$ of $G$, we have $\omega(S)\geq k$ or $S\cong\z^n$.
\end{co}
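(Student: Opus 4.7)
The plan is to run the argument of Corollary 3.2 almost verbatim, replacing Theorem 3.1 with Theorem 5.3. Let $S$ be a finitely generated subgroup of $G$. If $S\leq K$ then the hypothesis on $K$ immediately gives either $\omega(S)\geq u$ or $S\cong\z^n$ with $n\leq d$. Otherwise, the homomorphism $\chi:G\to\z$ with kernel $K$ restricts to a non-trivial (hence surjective onto $m\z$ for some $m\geq 1$) homomorphism on $S$, and we set $R=S\cap K$. If $R$ is infinitely generated then Proposition 2.4 yields $\omega(S)\geq 2^{1/4}$. If $R$ is trivial then $S\cong\z$. So the only case needing work is when $R$ is finitely generated and non-trivial, in which case $S=R\rtimes_{\alpha'}\z$ is itself a non-proper HNN extension, where $\alpha'$ is conjugation by any element $s\in S$ with $\chi(s)=m$.

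Since $R\leq K$ is torsion free and every finitely generated subgroup of $R$ is also a finitely generated subgroup of $K$, the group $R$ inherits the hypothesis of Theorem 5.3 with the same constants $u,d$. The only remaining point is therefore to verify that $\alpha'$ has no periodic conjugacy classes on $R$; this is the single piece of new content in the proof. Writing the stable letter of $S$ as $s=k_0 t^m$ for some $k_0\in K$, a short induction shows that $s^n=k^{*}t^{nm}$ with $k^{*}\in K$ depending only on $n$ and $k_0$. Thus if $r\in R\setminus\{e\}$ and $n\neq 0$ satisfied $\alpha'^n(r)=r_0 r r_0^{-1}$ for some $r_0\in R\subseteq K$, we would get
\[
\alpha^{nm}(r)=(k^{*-1}r_0)\,r\,(k^{*-1}r_0)^{-1}
\]
in $K$, which is a periodic conjugacy class for $\alpha$ since $nm\neq 0$, contradicting the hypothesis on $\alpha$. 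Hence $\alpha'$ has no periodic conjugacy classes and Theorem 5.3 applies to $S$, producing the desired lower bound.

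The lower bound on $\omega(S)$ that emerges is the minimum of $u$, $2^{1/4}$, and the constant supplied by Theorem 5.3 for the data $(u,d)$, so it depends only on $u$ and $d$ as required. The only place where anything non-formal happens is the transfer-of-hypothesis step for $\alpha'$, and even that reduces to the straightforward computation above; I do not expect any genuine obstacle beyond keeping the exceptional cases $R=\{e\}$ and $S\leq K$ straight.
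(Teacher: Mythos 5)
Your proposal is correct and follows essentially the same route as the paper's own proof: split into the cases $S\leq K$, $R=S\cap K$ infinitely generated, and $R$ finitely generated, and in the last case transfer the no-periodic-conjugacy-classes hypothesis from $\alpha$ to the induced automorphism of $R$ via the computation $(k_0t^m)^n=k^{*}t^{nm}$ before invoking Theorem 5.3. No substantive differences.
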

\begin{proof}
We are done if $S\leq K$ or if $S\cap K$ is infinitely generated. If
$S\cap K=R$ is finitely generated then we have $S=R\rtimes_\beta\z$
where the action of $\beta$ is conjugation by $ks^i$ for $k\in K$ and
where conjugation by $s$ on $K$ induces $\alpha$. We can then apply
Theorem 5.3 to $S$ unless $\beta$ has periodic conjugacy classes, say
$\beta^j(r)=lrl^{-1}$ for $r,l\in R$. But as $(ks^i)^j=k_0s^{ij}$ for
$k_0\in K$, we have $s^{ij}rs^{-ij}=k_0^{-1}lrl^{-1}k_0$ so $\alpha$ has
periodic conjugacy classes too.
\end{proof}

\section{The Klein bottle group}

It would be good to extend Theorem 5.3 to cases where other small groups
are allowed. In this section we will show how to alter the argument to
incorporate the Klein bottle group given by the presentation
$\langle\alpha,\beta|\beta\alpha\beta^{-1}=\alpha^{-1}\rangle$
because we have often seen results where the groups left over are cyclic,
$\z\times\z$ or the Klein bottle. However the ad hoc nature of this case
means that we need to impose an extra condition on our group $K$, which is
that it is locally indicable.
\begin{thm} Let $K$ (not equal to $\{e\}$)
be a finitely generated locally indicable group 
where there exists $u>1$ and $d\in\n$ such that if $H$ is a 
finitely generated 
subgroup of $K$ then either $\omega(H)\geq u$ or $H\cong\z^n$ for $n\leq d$
or $H$ is isomorphic to the Klein bottle group.
Then there exists $k>1$ depending only on $u$ and $d$ such that
for any non-proper HNN extension $G=K\rtimes_\alpha \z$ where $\alpha$
has no periodic conjugacy classes, we have $\omega(G)\geq k$.
\end{thm}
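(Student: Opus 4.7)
The plan is to follow the proof of Theorem 5.3 closely, introducing a new case analysis when the ascending kernel $L$ (or an intermediate term $L_s$) is isomorphic to the Klein bottle group $K_b$ rather than to a free abelian group. Given a finite generating set $A=\{a_1,\ldots,a_l\}$ for $G$, I consider the $2$-generator subgroups $H_{i,j,k}=\langle a_i,[a_j,a_k]\rangle$. The cases where $a_i\in K$, or where $a_i\notin K$ but $L=H_{i,j,k}\cap K$ is infinitely generated, are handled as before using the hypothesis on $K$ together with Proposition 2.4. The crux is when $a_i\notin K$ and $L$ is finitely generated, so that $H_{i,j,k}=L\rtimes\z$ with $t=a_i$ and $x=[a_j,a_k]$.

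I then run the chain analysis $L_0\subseteq L_1\subseteq\ldots\subseteq L$, where $L_s=\langle x_0,\ldots,x_s\rangle$ and $x_i=t^ixt^{-i}$. Whenever some $L_s$ is not small, the hypothesis on $K$ gives $\omega(L_s)\geq u$, and uniformly bounded word length of its generators in $A$ produces a bound on $\omega(G,A)$. So the hard case is when every $L_s$ is isomorphic to $\z^n$ with $n\leq d$ or to $K_b$. All-abelian chains are dispatched exactly as in Theorem 5.3 (using the Alexander polynomial and the no-periodic-conjugacy-class hypothesis), so the genuinely new situation is when $L$ stabilises as a Klein bottle.

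To handle this, I exploit that $K_b$ contains a unique abelian subgroup $A\cong\z^2$ of index $2$, namely $\langle\alpha,\beta^2\rangle$ in the standard presentation. Uniqueness follows from inspecting the three index $2$ subgroups via the abelianisation $K_b^{ab}=\z/2\oplus\z$: only one preimage subgroup is abelian. Hence $A$ is characteristic in $L$, and since $L\trianglelefteq H_{i,j,k}$, also $A\trianglelefteq H_{i,j,k}$. Passing to $H'=\langle A,t\rangle$, an index $2$ subgroup of $H_{i,j,k}$ of the form $\z^2\rtimes\z$, I apply Theorem 5.3 with base $\z^2$ and $d=2$ to obtain $\omega(H')\geq k''$ for some $k''>1$ depending only on $d$. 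Shalen--Wagreich then yields $\omega(H_{i,j,k})\geq(k'')^{1/3}$, which feeds back into a bound for $\omega(G,A)$.

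The main obstacle is to verify the input to Theorem 5.3: the automorphism of $A\cong\z^2$ induced by conjugation by $t$ must have no periodic conjugacy class. Suppose instead that $t^Nvt^{-N}=v$ for some nonzero $v\in A\leq K$ and $N\neq 0$. Writing $a_i=k_0s^m$ where $s$ generates the $\z$-factor of $G$ and $k_0\in K$, iterating the semidirect product law gives $t^N=k_1s^{Nm}$ for some $k_1\in K$, whence $\alpha^{Nm}(v)=k_1^{-1}vk_1$. Since $m\neq 0$ (as $a_i\notin K$) and $N\neq 0$, this is a periodic conjugacy class for $\alpha$, contradicting the hypothesis. Local indicability of $K$ enters throughout to guarantee that every non-trivial small $L_s$ has a surjection to $\z$, which is what permits the characteristic-subgroup reductions and keeps the chain arguments well defined in the presence of non-abelian small groups.
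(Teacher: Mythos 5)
Your handling of the terminal case---$L$ finitely generated and isomorphic to the Klein bottle group---is essentially what the paper does: the paper uses the characteristic $\z\times\z$ of index 4 (quotient $C_2\times C_2$) where you use the characteristic abelian subgroup of index 2, but both reduce $H_{i,j,k}$ to a finite-index subgroup of the form $\z^2\rtimes\z$ and transfer the absence of periodic conjugacy classes back to $\alpha$ exactly as you describe. That part is fine.

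The gap is in the chain analysis, which is where almost all of the paper's proof of this theorem actually lives. Your claim that ``whenever some $L_s$ is not small, \dots uniformly bounded word length of its generators in $A$ produces a bound on $\omega(G,A)$'' is only true if $s$ itself is uniformly bounded: the generators $x_0,\ldots,x_s$ of $L_s$ have word length growing linearly in $s$, so a first non-small term appearing at an unbounded stage yields no uniform bound on $\omega(G,A)$. In Theorem 5.3 this is controlled because the rank of an abelian $L_s$ can increase at most $d$ times, and any ``sticking'' (a proper finite-index inclusion $L_{s-1}<L_s$ of equal rank) is eliminated by the Alexander polynomial and centre arguments. Once Klein bottle terms are permitted you must redo all of this, and your proposal does not: you need to show (i) that a Klein bottle term can only first appear at $L_1$ (the paper gets this from local indicability forcing $\beta_1(L)=1$, hence Alexander polynomial $t\pm 1$, hence $\theta(x_i)=1$, which is incompatible with $x_1$ lying in the centre $\langle\beta^2\rangle$ of a later Klein bottle term); (ii) that a strictly increasing chain of Klein bottle groups $L_1<L_2<\cdots$ cannot persist until $L$ stabilises (the paper passes to the index-2 abelian kernel $M$ generated by $z=x_i^2$ and $y_i=x_{i-1}x_i$ and runs a second Alexander polynomial argument on $\langle t,y_i\rangle$, using that a monic quadratic cannot divide $at^2-(a+b)t+b$ with $a\neq\pm 1$); and (iii) that the chain cannot leave the small class after an unboundedly long Klein bottle stretch (the paper shows $M$ would then be abelian, so $L$ is abelian or a Klein bottle group, contradicting non-smallness). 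Without (i)--(iii), the constant $k$ you produce is not uniform over generating sets, so the theorem as stated is not proved. Also, local indicability is not merely a convenience ``throughout'': its precise role is to rule out $\beta_1(L)=0$ in step (i), and your write-up should say so.
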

\begin{proof}
We use the proof of Theorem 5.3 and indicate where we branch off to deal
with the Klein bottle group. Let us take a subgroup 
$H_{i,j,k}=\langle t,x\rangle$ with $t\notin K$ but $x\in K-\{e\}$ as before
and again let $L$ (which we can assume is finitely generated)
be $H_{i,j,k}\cap K$. We still consider our ascending sequence of
subgroups $L_0=\langle x_0\rangle\cong\z$,
$L_1=\langle x_0,x_1\rangle,\ldots $ and everything will work as before
unless we come across a group $L_i$ on the way which is isomorphic to the
Klein bottle group. As this is our
only small subgroup which is non-abelian, we will have $L$ is itself
isomorphic to the Klein bottle group or we will drop out of the small 
subgroups during the sequence.

As the first Betti number of the Klein bottle group $L_i$ is 1, we have
$\beta_1(L)\leq 1$ by point (3) on the Alexander polynomial.
Now we use the local indicability of $K$ so we have $\beta_1(L)=1$
(as $L$ is not trivial). This means that the Alexander polynomial 
of the homomorphism from $H_{i,j,k}$ onto $\z$ with kernel $L$
can only be $t\pm 1$. Let us assume
it is $t-1$ as the other case just needs changes of signs. 
Therefore by point (1) there is (up to sign) only one homomorphism $\theta$
from $L$ onto $\z$ and the Alexander polynomial above  
tells us that $x_i=x_{i+1}$ holds in $L/L'$, so
$\theta(x_i)=1$ for all $i$. Now for any $i$ we have that $\theta$ 
restricts to a surjective homomorphism on $L_i$, so whenever
$L_i$ is isomorphic to the Klein bottle group we must have that $\theta$ 
agrees with the unique homomorphism (up to sign) from
the Klein bottle group onto $\z$. Any element in the Klein bottle group
can be expressed as $\alpha^i\beta^j$ for $i,j\in\z$ which will have image
$j$ under this homomorphism (changing $\beta$ to $\beta^{-1}$ if necessary).

We now show that if a Klein bottle group appears at all amongst the $L_i$,
it must first happen at $L_1$. Otherwise $L_1$ would be abelian and
if it is cyclic with
$x_0^ax_1^b=e$ for coprime $a$ and $b$ then, although this might not imply
that all subsequent $L_i$ are cyclic or abelian, it does mean that the
abelianisation of all subsequent $L_i$ will be $\z$ but the Klein bottle
group has abelianisation $C_2\times\z$. Thus $L_1$ is $\z\times\z$.
Suppose that everything from $L_1$ to
$L_{k-1}$ is $\z\times\z$ (so $tL_{k-1}t^{-1}$ is too)
but $L_k$ is the Klein bottle
group. Then $x_1,\ldots ,x_{k-1}$ will all be in the centre of $L_k$
because these elements commute with $x_0$ and $x_k$, as well as with each
other. But the centre of the Klein bottle group is generated by $\beta^2$,
thus $\theta(x_1)$ would be even, not 1, which is a contradiction if
$k\geq 2$.
Consequently our generators $x_0,x_1$ of the Klein bottle group $L_1$
must be $\alpha^i\beta$ and $\alpha^j\beta$ when written in this form. But this
implies that the relation $x_0^2=x_1^2(=\beta^2)$ holds in $L_1$ and hence
in $L$.

Let us now look at the case where $L_i$ is always a
Klein bottle group for $i\geq 1$ but where $L_1\neq L_2$.
Note that if a Klein bottle group is contained in
another then this has to be of finite index.
We can take our homomorphism $\theta$ from $L$ to
$\z$ and then to the cyclic group $C_2$, with the composition called
$\lambda$. Note that the kernel $M$
of this homomorphism will be isomorphic
to $\z\times\z$ as it contains the elements in $L$
of the form $\alpha^i\beta^j$ for even $j$. 
Also we have $t^{\pm 1}M t^{\mp 1}
=M$ because $M$ is the set of elements whose total exponent sum
is even in the $x_i$s. 
Consequently we can obtain an increasing sequence of subgroups
$M_i=M\cap L_i$ of $L$, each of which is 
of index 2 in the corresponding subgroup $L_i$ so that $L_1<L_2$ implies
$M_1<M_2$, and also that $M_1$ and $M_2$ are
isomorphic to $\z\times\z$. We have that $M_1=\langle z,y_1\rangle$
and $M_2=\langle z,y_1,y_2\rangle$, 
where we have set $z=x_0^2=x_1^2=x_2^2=\ldots$
and $y_i=x_{i-1}x_i$. Both $y_1$ and $y_2$ go to 2 under $\theta$, so on
writing the elements of $L_2$ in $\alpha,\beta$ form we have that
$y_1=\alpha^l\beta^2$ and $y_2=\alpha^m\beta^2$, with $l,m\neq 0$ 
as $z=\beta^2$ but $\z\not\cong M_1<M_2$ implies that $z,y_1,y_2$ are three
distinct elements.
As $\alpha$ and $\beta^2$ sit inside the torsion free
abelian group $M_2$, we cannot have a relation of the form 
$y_1^b=y_2^a$ because applying $\theta$ tells us that $a=b$
which would imply that $y_1=y_2$. Thus $\langle y_1,y_2\rangle$ is 
$\z\times\z$ and we will have a relation holding of the form $y_2^a=y_1^bz^c$ 
with $a\neq\pm 1$. Moreover we can assume that $a$ and $b$ are coprime, 
because applying $\theta$ tells us that $a=b+c$, so we could pull out
a common factor from all three indices. 

We now argue in a similar fashion as before by using the Alexander
polynomial but this time applied to the group $\langle t,y_i:i\in\z\rangle$
with homomorphism $t\mapsto 1,y_i\mapsto 0$, which works because 
$ty_it^{-1}=y_{i+1}$. Certainly the kernel 
$\langle y_i:i\in\z\rangle$ is finitely generated and has first Betti number
equal to 2 as it is not cyclic but sits inside $M\cong\z\times\z$. 
But on combining the relations $y_2^a=y_1^bz^c$ and $y_3^a=y_2^bz^c$
which hold in $M$, we obtain $y_3^a=y_2^{a+b}y_1^{-b}$. Thus we require
our monic quadratic Alexander polynomial to divide $at^2-(a+b)t+b$ which
is a contradiction. 

As for the case where $L_i$ stops being a Klein bottle group along the way,
we again try for an argument involving the centre of $L$. We have that
$L_0=\z$ with $L_1$ a Klein bottle group properly contained with finite
index in $L_2$ which we can assume is also a Klein bottle group or else
we have reached a growth constant bigger than 1.
We still have the homomorphism $\lambda$ from $L$ to $C_2$ but with a 
different kernel $M$. However if we
stabilise at $L_t=L_{t+1}$ then such a kernel will always be generated by 
$x_0^2,x_1^2,\ldots ,x_t^2$ (which here are all the same element $z$) and
$x_0x_1,x_1x_2,\ldots ,x_{t-1}x_t$, which here are called 
$y_1,y_2,\ldots ,y_t$. Now if we restrict $\lambda$ to
$\langle x_i,x_{i+1}\rangle$ which being a conjugate of $L_1$
is also a Klein bottle group, we have as before 
that the intersection of this subgroup with $M$ is $\z\times\z$ and
generated by $x_i^2=x_{i+1}^2=z$ and $x_ix_{i+1}=y_{i+1}$, 
so for any $i$ the two elements $z$ and $y_i$ commute.
This means that $z$ is in the centre of 
$M$. As for $y_{i+1}$, we have that 
$L_1\cap M=\langle z,y_1\rangle$ has finite
index in $L_2\cap M=\langle z,y_1,y_2\rangle$ which is also
isomorphic to $\z\times\z$, because there is only one homomorphism 
from a Klein bottle group to $C_2$ which factors through $\z$.
Thus we must have a relation of the form $y_2^a=y_1^bz^c$ as before,
where we can again assume that $a$ and $b$ are coprime. 
But now we argue as previously that 
$y_1^{b^2}=(y_2^b)^az^{-cb}=(y_3^az^{-c})^az^{-cb}$ thus
$y_1$ to the power $b^2$ commutes with $y_3$ and so on.
We then get that $y_i$ to a high power of both $b$ and (by working
back down again) $a$ is in the
centre, thus $y_i$ is too and so $M$ is abelian after all. This tells us
that $L$ has an abelian subgroup of index 2, so it can only be abelian or
the Klein bottle group.

Thus we are done unless $L_1=L_2$ is the Klein bottle group. We now
argue the other way with $L'_0=\langle x_1\rangle$, 
$L'_1=\langle L'_0,x_0\rangle=L_1$, 
$L'_2=\langle L'_1,x_{-1}\rangle$ and we are done
unless $L'_2=L'_1$, in which case $L=L_1$ with
$H_{i,j,k}$ equal to $L\rtimes_\beta\z$ for some automorphism $\beta$.
But a Klein bottle group has a
characteristic subgroup $C\cong\z\times\z$ with quotient $C_2\times C_2$
so $H_{i,j,k}$ has a subgroup $(\z\times\z)
\rtimes_\beta\z$ of index 4. This gives us a lower bound 
for the growth constant of $H_{i,j,k}$ unless the subgroup is
virtually nilpotent, in which case as before $\beta$ and hence $\alpha$
has a periodic conjugacy class.
\end{proof}

As before we have a Corollary to this result which is proved similarly.
\begin{co}
Let $K$ be a finitely generated locally indicable group 
where there exists $u>1$ and $d\in\n$ such that if $H$ is a 
finitely generated 
subgroup of $K$ then either $\omega(H)\geq u$ or $H\cong\z^n$ for $n\leq d$
or $H$ is isomorphic to the Klein bottle group.
Then there exists $k>1$ depending only on $u$ and $d$ such that
for any non-proper HNN extension $G=K\rtimes_\alpha \z$ where $\alpha$
has no periodic conjugacy classes and for any finitely generated subgroup
$S$ of $G$, we have that either $\omega(S)\geq k$ or $S$ is $\z^n$ for
$n\leq d$ or is the Klein bottle group.
\end{co}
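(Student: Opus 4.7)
The plan is to mirror the derivation of Corollary 5.4 from Theorem 5.3, feeding the new Theorem 6.1 into the same three-case reduction. Given a finitely generated subgroup $S \leq G$, I split according to how $S$ meets $K$. If $S \subseteq K$, then $S$ is itself a finitely generated subgroup of $K$ and the hypothesis on $K$ immediately yields that either $\omega(S) \geq u$, $S \cong \z^n$ for some $n \leq d$, or $S$ is the Klein bottle group. If $S \not\subseteq K$ then the associated homomorphism $\chi: G \to \z$ with kernel $K$ restricts to a nontrivial homomorphism $S \to \z$; if its kernel $R = S \cap K$ is infinitely generated, Proposition 2.4 gives $\omega(S) \geq 2^{1/4}$.

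The substantive case is when $R$ is finitely generated (if $R = \{e\}$ then $S \cong \z$, which is among the exceptions). Then $S = R \rtimes_\beta \z$, where $\beta$ is conjugation in $G$ by some element of the form $g = ks^i$ with $k \in K$ and $i \neq 0$, and $s$ is a generator of the $\z$-factor of $G$ inducing $\alpha$ on $K$. The plan is to apply Theorem 6.1 to $S$ itself, with $R$ playing the role of the base group and $\beta$ the role of $\alpha$. The hypotheses on the base transfer cleanly: local indicability passes to subgroups, and any finitely generated subgroup of $R$ is a finitely generated subgroup of $K$, so inherits the $(u, d, \text{Klein bottle})$ trichotomy with exactly the same constants.

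The one point requiring an argument, and the main (though mild) obstacle, is verifying that $\beta$ has no periodic conjugacy classes. Suppose for contradiction $\beta^j(r) = lrl^{-1}$ for nontrivial $r \in R$ and some $l \in R$. Since $g^j$ lies over $ij \in \z$ under $\chi$, we can write $g^j = k_0 s^{ij}$ for some $k_0 \in K$. Conjugation of $r$ by $g^j$ then rearranges to $\alpha^{ij}(r) = k_0^{-1} l r l^{-1} k_0$, producing a periodic conjugacy class for $\alpha$ and contradicting the hypothesis. Hence $\beta$ has no periodic conjugacy classes, Theorem 6.1 applies to $S$, and we obtain $\omega(S) \geq k$ for the constant supplied by that theorem, which depends only on $u$ and $d$ through the inherited data for $R$. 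This exhausts all cases and gives the stated dichotomy.
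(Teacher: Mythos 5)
Your proof is correct and follows essentially the same route as the paper's: reduce via the trichotomy on $S\subseteq K$, the infinitely generated kernel case (Proposition 2.4), and the $R = S\cap K$ finitely generated case, then apply Theorem 6.1 to $S = R\rtimes_\beta\z$ after verifying that $\beta$ cannot have a periodic conjugacy class (the conjugation argument identical to Corollary 5.4). The one cosmetic difference: the paper's own proof carves out the subcase where $R$ is $\z^n$ or the Klein bottle group and disposes of it separately ("$S$ is a non-proper extension of one of these groups, which will either mean that $S$ has uniform exponential growth depending only on $d$, or we have a periodic conjugacy class"), whereas you observe—correctly—that Theorem 6.1 already applies to such $R$ (its hypotheses permit small bases, and its proof handles them via the virtually-nilpotent/periodic-conjugacy-class dichotomy), so no separate subcase is needed. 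Your version is slightly more streamlined and equally rigorous.
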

\begin{proof}
Once again, Theorem 6.1 applies to $S$ unless $R=S\cap K$ is $\z^n$ or the
Klein bottle group. But then $S$ is a non-proper extension of one of these
groups, which will either mean that $S$ has uniform exponential growth
depending only on $d$, or we have a periodic conjugacy class as at the end
of the proof of Corollary 5.4.
\end{proof}
As for applications, we have some when $d\leq 2$.
\begin{co}
Let $n\in\n$. Then there exists $k>1$ depending only on $n$ with the
following property.
Let $K$ be any finitely generated coherent locally indicable group of
cohomological dimension 2 and suppose
we form a iterated sequence of $n$ non-proper
HNN extensions to obtain the group $G$ where every automorphism
has no periodic conjugacy classes. Then for any finitely generated subgroup
$S$ of $G$, we have that either $\omega(G)\geq k$ or $S$ is $\z^n$ for
$n\leq 2$ or is the Klein bottle group.
\end{co}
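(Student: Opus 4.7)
The plan is to iterate Corollary 6.2 $n$ times, starting from $K$. Two preparatory points must be checked: that $K$ itself satisfies the hypotheses of Corollary 6.2 with $d=2$ and some explicit $u_0>1$, and that both local indicability and the subgroup dichotomy are inherited by each successive non-proper HNN extension so that the inductive loop can close.

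For the base case I would reproduce the argument of Proposition 2.3 in the present cohomological-dimension-2 setting. Given any non-trivial finitely generated $H\leq K$, coherence makes $H$ finitely presented, $H$ inherits cohomological dimension at most $2$, and local indicability supplies a surjection $\chi\colon H\to\z$. If $\ker(\chi)$ is infinitely generated, Proposition 2.4 yields $\omega(H)\geq 2^{1/4}$. Otherwise coherence of $K$ makes $\ker(\chi)$ finitely presented, and Bieri's Corollary 8.6 in \cite{bi} identifies it as free, so $H\cong F_r\rtimes\z$; this is one of $\z$, $\z\times\z$, or the Klein bottle group when $r\leq 1$, and has $\omega(H)\geq 3^{1/6}$ (Lemma 2.3 of \cite{csgr}) when $r\geq 2$. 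Setting $u_0=\min(2^{1/4},3^{1/6})$, the hypotheses of Corollary 6.2 then hold for $K$ with $d=2$.

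For the inductive step, local indicability passes to any non-proper HNN extension $L'=L\rtimes\z$: a non-trivial finitely generated $H\leq L'$ either lies in $L$ and so surjects onto $\z$ by hypothesis, or contains an element moved by the natural projection $L'\to\z$, which restricts non-trivially (hence surjectively) to $H$. Writing $G_0=K$ and $G_i=G_{i-1}\rtimes_{\alpha_i}\z$, induction then shows that each $G_i$ is locally indicable. Corollary 6.2 applied to $G_{i-1}$ (using that $\alpha_i$ has no periodic conjugacy classes) produces $u_i>1$, depending only on $u_{i-1}$ and $d=2$, such that the same subgroup dichotomy holds for $G_i$ with constants $u_i$ and $d=2$. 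After $n$ such steps, $k:=u_n$ depends only on $n$, and applying the dichotomy to $G=G_n$ gives the statement. The only genuine obstacle is the bookkeeping needed to confirm that both hypotheses of Corollary 6.2 genuinely survive each iteration; once that is in place, Corollary 6.2 does all the substantive work.
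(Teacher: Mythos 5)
Your proposal is correct and follows essentially the same route as the paper: establish the subgroup dichotomy for $K$ with $u=2^{1/4}$ and $d=2$ via coherence, cohomological dimension 2, local indicability and Bieri's theorem (as in Proposition 2.3), then iterate Corollary 6.2 after observing that local indicability (though not coherence or the dimension bound) passes to each non-proper HNN extension. The only cosmetic difference is your explicit bookkeeping of the constants $u_0,\ldots,u_n$, which the paper leaves implicit.
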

\begin{proof}
First let us consider $G_1=K\rtimes_{\alpha_1}\z$ where $\alpha_1$ has no
periodic conjugacy classes. The cohomological dimension means $K$ is
torsion free. On taking a finitely generated (hence presented) subgroup
$H$ of $K$ and using local indicability, we have a kernel of a homomorphism
from $H$ to $\z$ which if infinitely generated gives us 
$\omega(H)\geq 2^{1/4}$, and which if finitely generated must be free, as
mentioned in Proposition 2.3, giving us a similar bound 
unless $H$ is $\z,\z\times\z$ or the Klein bottle. 
Thus we can apply Corollary 6.2 with 
$u=2^{1/4}$ and $d=2$ to obtain our conclusion for $G_1$. Now we form
$G_2=G_1\rtimes_{\alpha_2}\z$. Although we may have lost coherence and have
probably increased the cohomological dimension by 1, $G_1$ is still
locally indicable because a finitely generated subgroup $S$ of $G_1$ will
either lie in $K$ or will have a non-trivial image in $\z$ on restriction
of the associated homomorphism of the HNN extension. Thus we are now able
to apply the Corollary repeatedly.
\end{proof}

The conditions on $K$ in Corollary 6.3 may look restrictive but we give
some cases where they are satisfied.\\
(1) If $K$ is a free-by-cyclic group of the form $F_n\rtimes_\alpha\z$,
with coherence established in \cite{fh}.
Thus we get uniform uniform growth for
iterated sequences of non-proper HNN extensions of finitely generated
free groups, as long as all automorphisms except for the first 
have no periodic conjugacy classes. We also have the same result
if all automorphisms except for the last have no periodic conjugacy classes,
by Proposition 5.1 and Corollary 5.2.\\
(2) If $K$ is a torsion free 1-relator group which is coherent, as 
mentioned in Proposition 2.3. Thus we can take such a group and apply
repeated non-proper HNN extensions, retaining uniform uniform
exponential growth if the automorphisms are non-periodic.\\ 
(3) Generalised Baumslag-Solitar groups, as mentioned at the end of
Section 4. There we got uniform uniform growth for non-proper HNN
extensions of generalised
Baumslag-Solitar groups which do not contain $\z\times\z$ formed
by any
automorphism. Now we get uniform uniform growth for non-proper HNN
extensions of any generalised Baumslag-Solitar group, but using non-periodic
automorphisms and repeated extensions thereof.\\
(4) We can also apply this result to finitely generated
linear locally indicable groups $K$ of cohomological dimension 2, in which case
we can lose the coherence condition. This is because by \cite{breu} we have
$u>1$ depending only on the degree $r$ of $K$ such that any finitely
generated subgroup $H$ of $K$ has $\omega(H)\geq u$ or is virtually
soluble. But the finitely generated soluble groups of cohomological
dimension at most 2 are known by \cite{gild} to be $\{e\},\z,\z\times\z$, the
Klein bottle or the Baumslag Solitar groups $BS(1,m)$ for 
$m\neq 0,\pm 1$ with the latter family
having growth constant at least $2^{1/4}$. Moreover this result also
applies to virtually soluble groups by Corollaries 3(ii) and 2 of
\cite{krop} so we can apply Corollary 6.2 $n$ times with $d=2$
to obtain $k$, which depends only on the degree $r$ and $n$, such that any
iterated sequence of non-proper HNN extensions of $K$ using automorphisms
with no periodic conjugacy classes results in a group $G$ where all
finitely generated subgroups either have growth constant at least $k$ or
must be $e,\z,\z\times\z$ or the Klein bottle group.

Finally we finish with a comment on amalgamated free products. In 
\cite{bdlh} (non-trivial)
amalgamated free products are also considered, where it is 
again shown that such a group has growth constant at least $2^{1/4}$
unless the amalgamated subgroup has index 2 in both factors. It is remarked
in the latter case that the resulting group may or may not have uniform
exponential growth but it may be fruitful to regard it as an HNN extension.
It is well known that a group $F$ decomposes as an amalgamated
free product $G*_{A=B}H$ with $A$ of index 2 in $G$ and the isomorphic 
subgroup $B$ of index 2 in $H$ if and only if there is a surjection from
$F$ to $C_2*C_2$. Consequently $F$ has an index 2 subgroup $S$ surjecting onto
$\z$, giving $\omega(S)\geq 2^{1/4}$ and thus
$\omega(F)\geq 2^{1/12}$ if $S$
is a proper HNN extension. If $S$ is a non-proper HNN extension it may not
have uniform exponential growth anyway, but we can at least
see if any of the results here apply to $S$.


\begin{thebibliography}{99}

\bibitem{arzl} G.\,N.\,Arzhantseva and I.\,G.\,Lysenok,
{\it A lower bound on the growth of word hyperbolic groups},
J. London Math. Soc. {\bf 73} (2006) 109--125. 

\bibitem{bp} B.\,Baumslag and S.\,J.\,Pride,
{\it Groups with two more generators than relators},
J. London Math. Soc. {\bf 17} (1978) 425--426.

\bibitem{bf} M.\,Bestvina and M.\,Feighn,
{\it A combination theorem for negatively curved groups},
J. Differential Geom. {\bf 35} (1992) 85--101.

\bibitem{bfad} M.\,Bestvina and M.\,Feighn,
{\it Addendum and correction to
``A combination theorem for negatively curved groups''},
J. Differential Geom. {\bf 43} (1996) 783--788.

\bibitem{bi} R.\,Bieri,
{\it Homological dimension of discrete groups},
Queen Mary College Mathematics Notes. Mathematics Department, Queen Mary
College, London, 1976.

\bibitem{bns} R.\,Bieri, W.\,D.\,Neumann and R.\,Strebel,
{\it A geometric invariant of discrete groups},
Invent. Math. {\bf 90} (1987) 451--477.

\bibitem{bs} R.\,Bieri and R.\,Strebel,
{\it Almost finitely presented soluble groups},
Comment. Math. Helv. {\bf 53} (1978) 258--278.

\bibitem{blamnt} P.\,E.\,Blanksby and H.\,L.\,Montgomery,
{\it Algebraic integers near the unit circle},
Acta Arith. {\bf 18} (1971) 355--369.

\bibitem{breu} E.\,Breuillard,
{\it A Strong Tits Alternative},\\
\texttt{http://arxiv.org/abs/0804.1395} (2008)

\bibitem{brgl} E.\,Breuillard and T.\,Gelander,
{\it Uniform independence in linear groups},
Invent. Math. {\bf 173} (2008) 225-263. 

\bibitem{brink} P.\,Brinkmann {\it Hyperbolic automorphisms of free groups},
Geom. Funct. Anal. {\bf 10} (2000) 1071--1089.

\bibitem{br} S.\,D.\,Brodski\u\i,
{\it Equations over groups, and groups with one defining relation},
Siberian Math. J. {\bf 25} (1984) 235--251.

\bibitem{bdlh} M.\,Bucher and P.\,de la Harpe,
{\it Free products with amalgamation, and HNN extensions of
uniformly exponential growth},
Math. Notes {\bf 67} (2000) 686--689.

\bibitem{butdef} J.\,O.\,Button,
{\it Large groups of deficiency 1},
Israel J. Math. {\bf 167} (2008) 111--140.

\bibitem{csgr} T.\,G.\,Ceccherini-Silberstein and R.\,I.\,Grigorchuk,
{\it Amenability and growth of one-relator groups},
Enseign. Math. {\bf 43} (1997) 337--354.

\bibitem{emo} A.\,Eskin, S.\,Mozes and H.\,Oh,
{\it On uniform exponential growth for linear groups},
Invent. Math. {\bf 160} (2005) 1--30.

\bibitem{fdc} L.\,Fabrizio Di Cerbo,
{\it A gap property for the growth of closed 3-manifold groups},
\texttt{http://arxiv.org/abs/0807.5015} (2008)

\bibitem{fh} M.\,Feighn and M.\,Handel,
{\it Mapping tori of free group automorphisms are coherent},
Ann. of Math. {\bf 149} (1999) 1061--1077.

\bibitem{gild} D.\,Gildenhuys,
{\it Classification of soluble groups of cohomological dimension two},
Math. Z. {\bf 166} (1979) 21--25.

\bibitem{grdlh} R.\,I.\,Grigorchuk and P.\,de la Harpe,
{\it One-relator groups of exponential growth have uniformly
exponential growth},
Math. Notes {\bf 69} (2001) 575--577.

\bibitem{dlh} P.\,de la Harpe, Topics in geometric group theory.
Chicago Lectures in Mathematics. University of Chicago Press,
Chicago, IL, 2000.

\bibitem{hemp} J.\,Hempel, 3-manifolds, Ann. of Math. Studies No 86,
Princeton University Press, Princeton, N.\,J.\,, 1976.

\bibitem{hw1r} J.\,Howie,
{\it On locally indicable groups},
Math. Z. {\bf 180} (1982) 445--461.

\bibitem{koch} D.\,H.\,Kochloukova,
{\it Some Novikov rings that are von Neumann finite and knot-like groups},
Comment. Math. Helv. {\bf 81} (2006), 931--943.

\bibitem{k} M.\,Koubi,
{\it Croissance uniforme dans les groupes hyperboliques},
Ann. Inst. Fourier (Grenoble) {\bf 48} (1998) 1441--1453.

\bibitem{krop} P.\,H.\,Kropholler,
{\it Baumslag-Solitar groups and some other groups of cohomological
dimension two},
Comment. Math. Helv. {\bf 65} (1990) 547--558. 

\bibitem{lev} G.\,Levitt,
{\it On the automorphism group of generalized Baumslag-Solitar groups},
Geom. Topol. {\bf 11} (2007) 473--515.

\bibitem{lyn} R.\,C.\,Lyndon,
{\it Cohomology theory of groups with a single defining relation},
Ann. of Math. {\bf 52} (1950) 650--665.

\bibitem{man} J.\,Mangahas,
{\it Uniform uniform exponential growth of subgroups of the
mapping class group},
\texttt{http://arxiv.org/abs/0805.0133} (2008)

\bibitem{osea} D.\,V.\,Osin,
{\it Algebraic entropy of elementary amenable groups},
Geom. Dedicata {\bf 107} (2004) 133--151.

\bibitem{wag} S.\,Wagon, The Banach-Tarski Paradox, Cambridge
University Press, Cambridge 1985, revised 1993.

\bibitem{wldf} J.\,S.\,Wilson,
{\it Soluble groups of deficiency 1},
Bull. London Math. Soc. {\bf 28} (1996) 476--480.

\bibitem{wlgr} J.\,S.\,Wilson,
{\it On growth of groups with few relators},
Bull. London Math. Soc. {\bf 36} (2004) 1--2.

\bibitem{jsw} J.\,S.\,Wilson,
{\it On exponential growth and uniformly exponential growth for
groups},
Invent. Math. {\bf 155} (2004) 287--303.

\bibitem{xx} X.\,Xie,
{\it Growth of relatively hyperbolic groups},
Proc. Amer. Math. Soc. {\bf 135} (2007) 695--704.

\end{thebibliography}
\end{document}